\theoremstyle{plain}
\newtheorem{theorem}{Theorem}
\newtheorem{lemma}{Lemma}
\newtheorem{proposition}{Proposition}
\theoremstyle{definition}
\newtheorem{definition}{Definition}
\theoremstyle{remark}
\newtheorem*{remark}{Remark}
\newtheorem*{remarks}{Remarks}
\newcommand{\ssum}[1]{\sum_{\substack{#1}}}
\newcommand{\ee}{{\varepsilon}}
\newcommand{\e}{{\rm e}}
\renewcommand{\tilde}{\widetilde}
\renewcommand{\bar}{\overline}
\newcommand{\CC}{{\mathbb C}}
\newcommand{\NN}{{\mathbb N}}
\newcommand{\QQ}{{\mathbb Q}}
\newcommand{\RR}{{\mathbb R}}
\newcommand{\ZZ}{{\mathbb Z}}
\newcommand{\cA}{{\mathcal A}}
\newcommand{\cI}{{\mathcal I}}
\newcommand{\cJ}{{\mathcal J}}
\newcommand{\cQ}{{\mathcal Q}}
\newcommand{\cR}{{\mathcal R}}
\newcommand{\cS}{{\mathcal S}}
\DeclareMathOperator{\ind}{\mathbf{1}}
\newcommand{\bb}{{\bf b}}
\newcommand{\floor}[1]{{\left\lfloor {#1} \right\rfloor}}
\renewcommand{\mod}[1]{\ (\text{mod }#1)}
\newcommand{\abs}[1]{\left| #1 \right|}
\newcommand{\norm}[1]{\left\| #1 \right\|}
\newcommand{\rb}[1]{\left( #1 \right)}
\numberwithin{equation}{section}
\title{Exponential sums with automatic sequences}
\author{S. Drappeau}
\address{Aix Marseille Université, CNRS, Centrale Marseille \\ I2M UMR 7373 \\ 13453 Marseille \\France}
\email{sary-aurelien.drappeau@univ-amu.fr}
\author{C. Müllner}
\address{Institut f\"ur Diskrete Mathematik und Geometrie \\ TU Wien \\ Wiedner Hauptstr. 8–10 \\ 1040 Wien, Austria}
\email{clemens.muellner@tuwien.ac.at}
\thanks{Both authors acknowledge support by the project MuDeRa (Multiplicativity, Determinism and Randomness), 
which is a joint project between the ANR (Agence Nationale de la Recherche) and the FWF (Austrian Science Fund).
Moreover, the second author was supported by the Austrian Science Foundation FWF, project F5502-N26, which is a part of the Special Research Program ``Quasi Monte Carlo Methods: 
Theory and Applications''. The authors thank C. Mauduit for helpful discussions.}
\subjclass[2010]{Primary: 11L07, 11B85; Secondary: 11L05, 11L26}
\date{\today}
\begin{document}

\begin{abstract}
  We show that automatic sequences are asymptotically orthogonal to periodic exponentials of type~$e_q(f(n))$, where $f$ is a rational fraction, in the P\'olya-Vinogradov range.
  This applies to Kloosterman sums, and may be used to study solubility of congruence equations over automatic sequences.
  We obtain this as consequence of a general result, stating that sums over automatic sequences can be bounded effectively in terms of two-point correlation sums over intervals.
\end{abstract}

\maketitle{}

\section{Introduction}

A complex-valued sequence~$(a_n)$ is called \emph{automatic}, if there is a finite deterministic automaton such that for each~$n$, the value~$a_n$ is given by a function of the final state of the automaton, when the automaton is given as input the digital representation of~$n$. There has been strong interest recently on understanding correlation of automatic sequences with other types of arithmetical functions.
Much of this interest has stemmed from the Sarnak conjecture: it was recently shown by the second author~\cite{mullner} that all automatic sequences are asymptotically orthogonal to the Möbius function~$\mu(n)$, in the sense that~$\sum_{n\leq x} a_n \mu(n)= o(x)$ as~$x\to\infty$.

In the present paper, we are interested in asymptotic orthogonality of automatic sequences with oscillating functions given by periodic exponentials of rational fractions. The prototype of correlations we wish to study are the incomplete Kloosterman sums
\begin{equation}\label{eq:sum-incomplete}
\ssum{n\in \cI \\ (n, q)=1} a_n \e\Big(\frac{\bar{n}}q\Big), \qquad (\e(z)=\e^{2\pi i z},\ n\bar{n} = 1 \mod{q})
\end{equation}
for an interval~$\cI$ of integers. Our goal is to find conditions on~$q$ and on the size of the interval~$|\cI|$ which ensure that we have \emph{asymptotic orthogonality} of~$(a_n)$ with~$(\e(\bar{n}/q))$, in the sense that the sum in~\eqref{eq:sum-incomplete} is~$o(|\cI|)$ as~$|\cI|\to\infty$.

When~$(a_n)$ is constant, a classical result of Weil~\cite{Weil} shows that the condition~$|\cI| \geq q^{1/2 + \ee}$ suffices: 
we will refer to this condition as the \emph{P\'olya-Vinogradov range} 
(in reference to the P\'olya-Vinogradov bound for sums of Dirichlet characters). 
This may be improved in specific circumstances~\cite{Korolev, Irving}, however, the range obtained by the Weil bound remains unsurpassed in general.

Our main result, which we will describe shortly, shows that asymptotic orthogonality for~\eqref{eq:sum-incomplete} holds in the P\'olya-Vinogradov range for \emph{all} automatic sequences.

\subsection*{Statement of results}

Let us now describe the precise setting of our study. Given~$k\geq 2$ a base, $\Sigma = \{0, \dotsc, k-1\}$, $\cA = (Q, \Sigma, \delta, q_0, \tau)$ a deterministic finite automaton with output function~$\tau:Q\to \CC$, we define the associated automatic sequence~$(a_n)_{n\geq 0} = (\tau(\delta(q_0,(n)_k)))_{n\geq 0}$, where $(n)_k$ denotes
the representation of $n$ in base $k$ without leading zeros. When we refer to an \emph{automatic sequence} in what follows, it will always be one given by such a construction. 
In particular, we assume without loss of generality that $\delta(q_0,0) = q_0$.
For a more detailed treatment of automatic sequences see for example \cite{alloucheShallit}.

Given a rational fraction~$f = P(X)/Q(X) \in\QQ(X)$, $n\in\ZZ$ and~$q\in\NN_{>0}$, we define~$\e_q(f(n))$ following the definition of section~4A of~\cite{Polymath}; we describe this in detail below (Section~\ref{sec:weil-bounds}) and simply note for now that whenever~$(Q(n), q)=1$, we have
$$ \e_q(f(n)) = \e\Big(\frac{P(n)\bar{Q(n)}}q\Big). $$

\begin{definition}
Let~$f\in\QQ(X)$, which we write in reduced form~$f=P/Q$ with~$P, Q\in\ZZ[X]$ and coprime. Let also an integer~$q\geq 1$ be given.
\begin{enumerate}[(i)]
\item We denote by~$(q, Q)$ the greatest common divisor of~$q$ and~$Q$ in~$\ZZ[X]$.
\item We will say that \emph{$f$ is well-defined~$\mod{q}$} if~$(q, Q)=1$.
\item We define a subset of the primes by
$$ \cQ_f = \{p :\ f\text{ reduces to a quadratic polynomial modulo } p\}. $$
\item We will say that \emph{$f$ has total degree~$d$} if~$\deg P + \deg Q = d$.
\end{enumerate}
\end{definition}

Our main result is the following bound.

\begin{theorem}\label{th:bound-exp-automatic}
Let~$(a_n)$ be an automatic sequence, $f\in\QQ[X]$ be a rational function of total degree at most~$d\geq 1$, and~$q\geq 1$ such that~$f$ is well-defined~$\mod{q}$. Let~$q_1$ be the largest squarefree divisor of~$q$, coprime to~$k$ and having no prime factor in~$\cQ_f$:
$$ q_1 := \prod_{\substack{p\|q:\\p\not\in \cQ_f, p\nmid k}} p. $$
Then there exists~$c>0$, depending at most on~$d$ and the underlying automaton~$\cA$, such that
\begin{equation}\label{eq:bound-auto-nonquad}
\ssum{n\in \cI} a_n \e_q(f(n)) \ll_{\ee, \cA, d} |\cI|^{1+\ee} \Big(\frac1{q_1}+\frac{q^2}{q_1|\cI|^2}\Big)^c,
\end{equation}
for any interval of integers~$\cI \neq \varnothing$.

If~$f(X)$ is polynomial of degree exactly~$2$ and leading coefficient~$u/v$ with $(v, q)=1$, then
\begin{equation}\label{eq:bound-auto-quad}
\ssum{n\in \cI} a_n \e_q(f(n)) \ll_{\ee, \cA, u, v} |\cI|^{1+\ee} \Big(\frac{1}{q} + \frac{q}{|\cI|^2}\Big)^c,
\end{equation}
where the implied constant may now also depend on~$u$ and~$v$. 
\end{theorem}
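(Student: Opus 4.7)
My plan is to combine the paper's general structural theorem---announced in the abstract and stated elsewhere in the paper---which bounds sums of an automatic sequence against an arbitrary bounded function $F$ in terms of the two-point correlations $\sum_n F(n)\overline{F(n+r)}$ over intervals, with the classical Weil-type bounds on exponential sums of rational fractions modulo~$q$ recalled in Section~\ref{sec:weil-bounds}.

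First I would apply this general reduction with $F(n) = \e_q(f(n))$. This converts the problem into estimating
$$ C(r, \cJ) := \ssum{n \in \cJ} \e_q\bigl(f(n) - f(n+r)\bigr) $$
uniformly for $r$ in a prescribed range and for subintervals $\cJ \subset \cI$. Given the shape of~\eqref{eq:bound-auto-nonquad}, it should suffice to prove, for most shifts $r$, the bound $|C(r,\cJ)| \ll (|\cJ|/q_1 + 1)\, q_1^{1/2+\ee}$ of the shape produced by completion followed by the Weil bound on complete sums modulo~$q_1$.

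The second step is to analyse the rational function $g_r(X) := f(X) - f(X+r)$. Writing $f = P/Q$ in reduced form, the numerator of $g_r$ equals $P(X)Q(X+r) - P(X+r)Q(X)$; its leading monomials cancel, so the total degree drops strictly below $\deg P + \deg Q$. Crucially, the reduction of $g_r$ modulo a prime~$p$ fails to be non-degenerate only if $p \mid r$, $p \mid k$, or $f$ itself reduces to a quadratic polynomial modulo~$p$ (that is, $p \in \cQ_f$), since only in those situations can $g_r$ collapse to a constant mod~$p$. The Weil estimates then supply the pointwise saving for $C_p(r, \cJ)$ at each remaining $p \mid q_1$, and multiplicativity of $\e_q$ across coprime moduli assembles them into the claimed saving over $q_1$; the primes $p \mid q/q_1$ and the square part of $q$ are absorbed trivially, which is precisely what the definition of $q_1$ encodes.

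Finally, the quadratic case~\eqref{eq:bound-auto-quad} is handled separately because $g_r$ is now affine with leading coefficient proportional to $2ru/v$; for any prime $p \nmid 2uv$ with $p \nmid r$ the complete linear sum $C_p(r,\cdot)$ vanishes exactly, so the entire modulus~$q$ (not just $q_1$) contributes to the saving, at the cost of allowing the implied constant to depend on $u,v$. The main obstacle I foresee is the careful bookkeeping of degenerate shifts---those $r$ for which $g_r$ collapses modulo some $p \mid q_1$---and verifying that, after the averaging over~$r$ imposed by the general theorem, the contribution of exceptional shifts does not destroy the exponent~$c$ needed to make~\eqref{eq:bound-auto-nonquad} non-trivial throughout the P\'olya-Vinogradov range $|\cI| \geq q^{1/2+\ee}$.
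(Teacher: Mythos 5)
Your outline captures the right skeleton for the strongly connected case, and the paper's actual proof does proceed exactly by pairing the van der Corput reduction (Proposition~\ref{prop:weyl}) with the Weil-type estimate of Lemma~\ref{le:rational_differences}, with the quadratic case siphoned off to a geometric-sum bound. But there are two real issues.

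First, the structural reduction you invoke (Proposition~\ref{prop:weyl}) is only stated and proved for \emph{strongly connected} underlying automata; its proof leans on the naturally induced transducer machinery of~\cite{mullner} (Theorem~\ref{th:full_G}), which genuinely requires strong connectivity. The general Theorem~\ref{th:bound-exp-automatic} therefore needs an additional bridge, which the paper supplies in Section~\ref{sec:proof-final} (Proposition~\ref{prop:to-connected}): one slices $\cI$ into blocks of length $K=k^\sigma$, observes that for all but $O(x^{1-\delta})$ block indices $r$ the automaton has entered a final (strongly connected) component after reading the high digits, and then applies the strongly connected case on each block. Crucially, this block decomposition only works because the bound in the strongly connected case is \emph{uniform in shifts}: the block starting at $rK$ involves $\e_q(f(rK+n))$, so you need the estimate to hold for $f(\cdot+rK)$ with implied constants independent of $r$. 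The paper achieves this by ensuring the exponent $c$ and constants in~\eqref{eq:bound-auto-nonquad} depend only on the total degree $d$ (and the automaton), not on the coefficients of $f$, and by noting $\cQ_{f(\cdot+r)}=\cQ_f$. Your proposal does not address any of this; as written it proves only Proposition~\ref{prop:exp-automatic-connected}, not the theorem.

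Second, a smaller point: the two-point correlations produced by Proposition~\ref{prop:weyl} are not over subintervals but over arithmetic progressions $n\equiv m'\pmod{RM^2}$ where $RM^2$ is a power of $k$. The condition $p\nmid k$ in the definition of $q_1$ does not reflect a degeneracy of $g_r$ modulo $p$ (as your phrasing suggests); it comes from the fact that, after the change of variables $n=m'+m\cdot RM^2$, the Weil bound for the complete sum mod $p$ gives no saving when $p$ divides the progression modulus $RM^2\mid k^\infty$. Lemma~\ref{le:rational_differences} packages this correctly (the product excludes $p\mid rs$), and Lemma~\ref{lem:gcd} rather than a degree-drop argument is what controls the $\gcd$ factor in the Weil bound; the numerator $P(X)Q(X+r)-P(X+r)Q(X)$ need not have small total degree, so that part of your reasoning would need to be replaced.
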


In particular, if~$q=p$ is prime, and~$f$ does not reduce to a linear function~$\mod{p}$, we obtain
$$ \ssum{y<n\leq y+x} a_n \e_p(f(n)) \ll_{\ee, \cA, f} x^{1+\ee} \Big(\frac1p + \frac{p}{x^2}\Big)^c. $$
As mentioned earlier, this bound is non-trivial in the whole P\'{o}lya-Vinogradov range~$|\cI|\geq q^{1/2+\ee}$.
As an example, if~$s_2(n)$ denotes the sum of digits of~$n$ in base~$2$, then for some~$c>0$,
$$ \ssum{y < n \leq y+x \\ s_2(n)\text{ is even}} \e\Big(\frac{\bar{n}}{q}\Big) \ll_{\ee} x^{1-c\ee} \qquad (y\geq 0, x\geq 1)$$
for~$q$ prime and~$x^\ee \leq q \leq x^{2-\ee}$.

\begin{remarks} \hfill
\begin{enumerate}[1-]
\item Note that the bound~\eqref{eq:bound-auto-nonquad} is trivial when~$f$ is a linear or constant polynomial. 
This is clear for constant~$f$, and when~$f(X)=X$ for instance, it is easy to see that the stated bound fails with~$\cI=[0,q/2]\cap\ZZ$ and~$a_n=1$ for all~$n$.

\item As mentioned, we will prove a general statement (Proposition~\ref{prop:weyl} below) showing that for a bounded sequence of coefficients~$(K(n))_{n\geq 1}$, we obtain a non-trivial bound for~$\sum_{n\in \cI} a_n K(n)$ as soon as we have non-trivial bounds on two-point correlations sums of the kind
$$ \ssum{n\in \cI \\n\equiv a\mod{q}} K(n+r)\bar{K(n)} $$
with some mild uniformity in~$q$ and~$r$. For instance, this offers the possibility to take~$K(n)$ to be a more general algebraic trace function~\cite{FKM1, FKM2}, or Fourier coefficients of a $GL_2$ holomorphic cusp form~\cite{Blomer}.

\item The case when the automatic sequence is sparse, in the sense that~$\sum_{n\in\cI} |a_n| = o(|\cI|)$ as~$|\cI|\to\infty$, is more delicate as, then, the ``trivial bound'' obtained from the triangle inequality is possibly smaller than the right-hand sides of our bounds~\eqref{eq:bound-auto-nonquad} and~\eqref{eq:bound-auto-quad}. Our bounds yield a non-trivial saving as long as~$\sum_{n\in\cI} |a_n| \gg |\cI|^{1-\eta}$ and~$\eta>0$ is small enough, in the range~$|\cI|^{O(\eta)} \leq q \leq |\cI|^{2-O(\eta)}$. For instance, our results apply for numbers with one missing digit in a large enough base~$k\geq k_0$. Obtaining a good estimate for the smallest such~$k_0$ is a challenging question, which we do not address here; see~\cite{Maynard} for recent progress on the corresponding question for primes.
\end{enumerate}
\end{remarks}

Bounds of the type of Theorem~\ref{th:bound-exp-automatic} can be used to answer additive problems, see~\cite{FM}. We illustrate this by the following statement, concerning solutions to congruence equations.
\begin{theorem}\label{th:bound-additive}
Let~$\cS \subset\NN$ be a set of integers with the property that~$(a_n) = (\ind_{n\in\cS})$ is an automatic sequence; such a set is called automatic set. There exists~$r\in\NN$ and~$\delta>0$, depending only on the automaton~$\cA$ underlying~$(a_n)$, such that the following holds. 
For all rational fractions~$f_1, \dotsc, f_r$ none of which is a linear or constant polynomial, all $m\in\ZZ$ and all prime~$q$, the number~$N_\cS((f_j), q)$ of solutions to the congruence equation
$$ f_1(n_1) + \dotsb + f_r(n_r) \equiv m \mod{q} $$
with each~$n_j\in \cS\cap[1, q]$, is asymptotically
\begin{equation}\label{eq:estim-additive}
N_\cS((f_j), q) = \frac{|\cS\cap [1, q]|^r}q \big\{ 1 + O(q^{-\delta})\big\}.
\end{equation}
The implied constant may depend on~$f_1, \dotsc, f_r$ and~$\cS$.
\end{theorem}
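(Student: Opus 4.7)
My plan is to detect the congruence by additive characters modulo the prime $q$ and to estimate each resulting exponential sum by Theorem~\ref{th:bound-exp-automatic}. Writing $a_n=\ind_{n\in\cS}$ and using orthogonality of characters on $\ZZ/q\ZZ$, one obtains
$$ N_\cS((f_j),q) \;=\; \frac{1}{q}\sum_{h=0}^{q-1}\e_q(-hm)\prod_{j=1}^{r}T_j(h), \qquad T_j(h):=\sum_{n=1}^q a_n\,\e_q(hf_j(n)). $$
The term $h=0$ contributes precisely the expected main term $|\cS\cap[1,q]|^r/q$ in~\eqref{eq:estim-additive}; it remains to show that the contribution of $h\neq 0$ is smaller by a factor $q^{-\delta}$.

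For $h\not\equiv 0\mod q$ and $q$ prime beyond an effective constant depending on the $f_j$, the fraction $hf_j$ reduces modulo $q$ to a rational fraction of the same total degree $d_j\geq 2$ as $f_j$. Theorem~\ref{th:bound-exp-automatic} applied with $\cI=[1,q]$ supplies a bound of the form $|T_j(h)|\ll_{\ee,\cA,f_j} q^{1+\ee-c}$ uniformly in $h$: when $hf_j$ does not reduce to a quadratic polynomial modulo $q$, the squarefree factor $q_1$ in~\eqref{eq:bound-auto-nonquad} equals $q$ and the bound is immediate; when $f_j$ is itself a quadratic polynomial, the leading coefficient of $hf_j$ remains invertible modulo $q$ and~\eqref{eq:bound-auto-quad} yields the same estimate. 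Here $c=c(\cA,d)>0$ with $d=\max_j d_j$. Multiplying over $j$ and summing over $h\neq 0$, the off-diagonal contribution is bounded by $q^{r(1+\ee-c)}$.

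To conclude we compare with the main term. A classical structural result on automatic sequences guarantees that an infinite automatic set $\cS$ satisfies $|\cS\cap[1,q]|\gg q^\alpha$ for some $\alpha=\alpha(\cA)>0$, so the main term is $\gg q^{r\alpha-1}$, and the asymptotic~\eqref{eq:estim-additive} with relative error $q^{-\delta}$ reduces to the inequality
$$ r(1+\ee-c)+1+\delta \;\leq\; r\alpha, $$
which admits a solution whenever $c>1-\alpha$, by choosing $r$ large enough. The principal obstacle is arranging this with $r$ and $\delta$ depending only on $\cA$ as the statement demands: since $c$ in Theorem~\ref{th:bound-exp-automatic} \emph{a priori} depends on $d$, this requires either a uniform-in-$d$ version of that bound, or an iterative device (e.g.\ Cauchy--Schwarz in the $h$-variable combined with the $L^2$-Parseval identity for the $T_j$) that reduces the dependence on the degrees. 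Handling very sparse $\cS$ (small $\alpha$) is delicate and presumably falls outside the regime in which Theorem~\ref{th:bound-exp-automatic} itself yields any saving, in line with Remark~3 above.
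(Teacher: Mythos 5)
The paper omits an explicit proof of Theorem~\ref{th:bound-additive} (it is dispatched with a reference to~\cite{FM}), so I cannot compare your argument line by line with the authors'. Your strategy --- orthogonality of additive characters modulo the prime~$q$ to extract the main term at $h=0$, followed by Theorem~\ref{th:bound-exp-automatic} on each $T_j(h)$ for $h\neq 0$ --- is certainly the intended one, and the algebra you carry out (the condition $r(\alpha-1+c-\ee)\geq 1+\delta$) is correct. But several of the points you flag need to be settled, and there is one genuine gap you do not flag.

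First, your worry that $c$ from Theorem~\ref{th:bound-exp-automatic} depends on $d$, and hence would force $r$ to depend on the $f_j$, evaporates on reading the proof in Sections~\ref{sec:proof-connected}--\ref{sec:proof-final}: the saving exponent is produced as $c=\min(\eta/16,1/32)$ (non-quadratic case) or $c=\eta/(2+4\eta)$ (quadratic case), with $\eta$ the exponent of Lemma~\ref{le:synchronizing}, and these depend only on $\cA$. Only the \emph{implied constant} in~\eqref{eq:bound-auto-nonquad} depends on~$d$, which is harmless since the implied constant in~\eqref{eq:estim-additive} is allowed to depend on the $f_j$. So there is no need for the ``iterative device'' you propose to remove degree dependence.

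Second, a gap you do not flag: when $f_j$ is a quadratic polynomial you invoke~\eqref{eq:bound-auto-quad} for $hf_j$, asserting it ``yields the same estimate''. But the implied constant in~\eqref{eq:bound-auto-quad} depends on the numerator $u$ of the leading coefficient, through the requirement $(RM)^2<q/(4|u|)$ in the proof. For $hf_j$ the relevant numerator is $hu_j$ reduced mod~$q$, which for generic $h$ is of size $\asymp q$, and then the constraint forces $R=M=1$ and the bound becomes trivial. So Theorem~\ref{th:bound-exp-automatic} cannot be applied as a black box uniformly in~$h$ in the quadratic case; one must instead rerun the argument with $\e_q(hf_j(n))$ in place of $\e_q(f_j(n))$, observing that the underlying geometric-sum estimate~\eqref{eq:bound-incomplete-quadratic} and the subsequent summation over~$r<R$ are genuinely uniform in the auxiliary twist~$h$.

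Third, your concern about sparse~$\cS$ is real, and in fact the ``classical structural result'' you cite is not quite true: regular sets of integers obey a polynomial-or-polylogarithmic dichotomy, and, e.g., $\cS=\{2^n:n\geq 0\}$ is automatic with $|\cS\cap[1,q]|\asymp\log q$, so no bound of the form $|\cS\cap[1,q]|\gg q^{\alpha}$ holds. In that regime the main term $|\cS\cap[1,q]|^{r}/q$ is $o(1)$ and the asymptotic~\eqref{eq:estim-additive} cannot hold (it would force $N_\cS=0$ with a relative error smaller than~$1$). So the statement must be read with the implicit restriction, consistent with Remark~3, that $\cS$ has density at least $|\cI|^{-\eta}$ with~$\eta$ small compared with~$c$, equivalently $\alpha+c>1$ in your notation; this should be said explicitly, and once it is, your choice of~$r$ large enough (depending only on~$\cA$ via $\alpha$ and~$c$) closes the argument for the non-quadratic $f_j$. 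Note also that your suggested Cauchy--Schwarz/Parseval refinement (using $\sum_h|T_j(h)|^2\leq d_jq|\cS\cap[1,q]|$) does not weaken the condition $c>1-\alpha$ in the limit $r\to\infty$, so it does not rescue the genuinely sparse case.
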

\begin{remark}
As we have already remarked, constant sequences are automatic, so the above does not hold in general for small values of~$r$. 
It is however an important aspect that~$r$ does not grow with~$q$, and does not depend on~$(f_j)$.
\end{remark}

\subsection*{Context and overview}

There has been many works on correlations of automatic sequences with other arithmetic objects. Some of this interest has stemmed from questions of diophantine approximations and normality of numbers constructed from automatic sequences. For instance, Mauduit~\cite{Mauduit} obtains non trivial bounds on sums of the kind
\begin{equation}
\ssum{n\leq x} a_n \e(\alpha n)\label{eq:linear-phase}
\end{equation}
for irrationnal $\alpha$. See the references in~\cite{Mauduit} for more on the history of this question.\footnote{We emphasize that in these works, the case when~$\sum_{n\leq x}|a_n|=o(x)$ is particularly important. As we have already remarked, we do not focus on sparse sequences in this work.}

The method presented here, however, is related to partial progress on Sarnak's conjecture~\cite{Sarnak}. 
For automatic sequences of the kind of~$(-1)^{s_2(n)}$ (where we recall that~$s_2(n)$ is the sum of base-2 digits of~$n$), 
Mauduit and Rivat~\cite{MR-G} point out a certain property (which was later called ``carry property''), 
and show how it can be exploited in conjunction with the differencing method of Weyl and van der Corput together with strong estimates for the $L^{1}$ norm of the discrete Fourier transform of this sequence,
to obtain Sarnak's conjecture for this case; 
they also apply this method to show orthogonality to $\Lambda(n)$ which gives a prime number theorem.
Their approach was further formalized and generalized in~\cite{MR-RS} (see also~\cite{Hanna}), but the estimates on the $L^{1}$ norm were replaced by a so called ``Fourier property'' ($L^\infty$-bounds on the discrete Fourier transform). 
Finally, the second author recently showed Sarnak's conjecture for automatic sequences~\cite{mullner}, 
generalizing in particular results for synchronizing automatic sequences~\cite{synchronizing} and invertible automatic sequences~\cite{drmota2014,fourauthors}. 

The present work shows that both Mauduit-Rivat's ``carry property'', and the second author's structure theorems for automatic sequences, can be successfully combined with van der Corput differencing when handling algebraic exponential sums. At the heart of the bounds~\eqref{eq:bound-auto-nonquad} and~\eqref{eq:bound-auto-quad} lies Weil's bounds on exponential sums~\cite{Weil}.

In Section~\ref{sec:weil-bounds}, we state the precise version of Weil's bounds which we will use, and in Section~\ref{sec:auxil-automata} we quote auxiliary results on automata, mainly from~\cite{synchronizing,invertible,mullner}. In Section~\ref{sec:weyl-differentiation}, we prove a general statement (Proposition~\ref{prop:weyl}) linking generic sums over automatic sequence into differentiated sums over intervals. In Section~\ref{sec:proof-connected}, we prove Theorem~\ref{th:bound-exp-automatic} in a particular case, and in Section~\ref{sec:proof-final} we deduce the general case.

\section{Weil bounds}\label{sec:weil-bounds}

We begin by recalling from~\cite{Polymath} the convention regarding~$\e_q(f(n))$. Write in reduced form~$f(X) = P(X)/Q(X)$, with~$P, Q \in \ZZ[X]$. Given a prime power~$p^\nu$ with~$Q\not\equiv 0\mod{p^\nu}$, reduce~$P/Q \equiv P_1/Q_1 \mod{p^\nu}$. For~$n\in\ZZ$, we define a function of the pair~$(f, n)$ by
$$ \e_{p^\nu}(f ; n) = \begin{cases} \e\Big(\frac{P_1(n)\bar{Q_1(n)}}{p^\nu}\Big), & (Q_1(n), p)=1 \\ 0 & \text{otherwise.} \end{cases} $$
We will denote this by the slight abuse of notation~$\e_{p^\nu}(f(n))$. We extend this definition to arbitrary moduli~$q\geq 1$ with~$(q, Q)=1$ by the Chinese remainder theorem,
\begin{equation} \e_q(f(n)) := \prod_{p^\nu\|q} \e_{p^\nu}\Big(\frac{f(n)}{q/p^\nu}\Big).\label{eq:def-eq} \end{equation}

The purpose of this section is to justify the following bound on particular exponential sums.
\begin{lemma}\label{le:rational_differences}
Let~$x, s\geq 1$, $y\geq 0$, $q\geq 2$, $(a, r)\in\ZZ^2$, and~$f\in\QQ[X]$ of total degree at most~$d$, which is well-defined~$\mod{q}$. Then we have
\begin{equation}\label{eq:bound-incomplete-nonquad}
\ssum{y<n \leq y+x \\ n\equiv a\mod{s}} \e_q(f(n+r)-f(n)) \ll_{\ee, d} q^\ee \Big(\prod_{\substack{p\|q, p\nmid rs \\ p\not\in \cQ_f}}p\Big)^{-\frac12} \Big(\frac{x}s + q\Big).
\end{equation}
If~$f(X)=\displaystyle{\frac uv X^2}$ with~$(uq, v)=1$, then
\begin{equation}
\ssum{y < n \leq y+x \\ n\equiv a\mod{s}} \e_q(f(n+r)-f(n)) \ll \min\Big\{\frac{x}{s}+1, \Big\|\frac{2u\bar{v}rs}q\Big\|^{-1}_{\RR/\ZZ}\Big\}.\label{eq:bound-incomplete-quadratic}
\end{equation}
\end{lemma}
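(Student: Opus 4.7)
The plan is to convert the incomplete sum into a Fourier-dual combination of complete exponential sums modulo~$q$, factor each by the Chinese remainder theorem, and invoke the Weil bound (as formulated in \cite{Polymath}) at each prime power. First I would change variable $n = a + sm$, writing the sum as
$$\ssum{m \in I'} \e_q\big( g(a+sm) \big), \qquad g(X) := f(X+r) - f(X),$$
where $I'$ is an interval of length $\ll x/s + 1$. The rational function~$g$ has denominator dividing $Q(X+r)Q(X)$, which remains coprime to $q$ whenever $f$ is well-defined modulo $q$, so $\e_q(g(\cdot))$ is well-defined in the Polymath convention.

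Next I would complete the sum via discrete Fourier analysis modulo $q$: for any $h : \ZZ/q\ZZ \to \CC$ and any interval $I'$ of length at most $q$, one has the standard inequality
$$\ssum{m \in I'} h(m) \ll \Big( \frac{|I'|}{q} + \log q \Big) \max_{v \bmod q} |\hat h(v)|, \qquad \hat h(v) := \ssum{m \bmod q} h(m) \e_q(-vm).$$
Taking $h(m) = \e_q(g(a+sm))$ reduces the task to a uniform bound on the complete sums $S(v) = \sum_{m \bmod q} \e_q(g(a+sm) - vm)$; the Chinese remainder theorem then factors $S(v)$ as a product of local sums $S_{p^\nu}(w)$ modulo each $p^\nu \| q$.

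The heart of the argument is a case analysis at each such prime power. For primes $p \| q$ with $p \nmid rs$ and $p \notin \cQ_f$, I would argue that $g(a+sX)$ modulo $p$ is a rational function of degree at least $2$: the affine change of variable $X \mapsto a+sX$ is invertible modulo $p$ (since $p \nmid s$), and the condition $p \notin \cQ_f$ together with $p \nmid r$ and $p \nmid Q$ ensures that the first finite-difference $f(X+r)-f(X) \bmod p$ has degree at least~$2$ in the rational-function sense. The Polymath form of Weil's bound then yields $|S_p(w)| \ll \sqrt{p}$ uniformly in $w$, with an implied constant depending only on the total degree. For the remaining prime powers (the squareful part of~$q$, and primes $p \| q$ lying in $\cQ_f$ or dividing $rs$) I retain the trivial bound $|S_{p^\nu}(w)| \leq p^\nu$. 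Multiplying gives
$$\max_v |S(v)| \ll_{\ee,d} q^{1+\ee}\Big(\prod_{\substack{p\|q,\, p\nmid rs \\ p\notin \cQ_f}} p \Big)^{-1/2},$$
and reinserting this in the completion bound yields \eqref{eq:bound-incomplete-nonquad}.

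For the quadratic case \eqref{eq:bound-incomplete-quadratic} no Weil machinery is needed: with $f(X)=(u/v)X^2$ and $(v,q)=1$, one computes $g(X) \equiv u\bar v(2rX+r^2) \bmod q$, so the summand is a pure linear phase in $n$, and the sum over $n \equiv a \pmod s$ becomes an elementary geometric series bounded by $\min\{x/s+1,\, \|2u\bar v rs/q\|_{\RR/\ZZ}^{-1}\}$. The main obstacle I anticipate is the verification underlying the third paragraph: pinning down precisely which primes produce a Weil saving, and matching the non-triviality hypothesis of the rational-function Weil bound modulo $p$ with the combinatorial conditions defining $\cQ_f$ and the coprimality constraints on $r$ and $s$, while keeping the implied constant uniform in $f$ (so depending only on $\ee$ and $d$, as stated).
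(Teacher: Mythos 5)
Your overall strategy coincides with the paper's: change of variable $n = a+sm$, completion of the incomplete sum via additive characters, CRT factorisation of the complete sum, Weil's bound (in the Polymath form, Lemma~\ref{lem:weil}) at the ``good'' primes, and the trivial bound at the squareful part, at primes dividing $rs$, and at primes in $\cQ_f$. The cosmetic difference --- you factor locally at every $p^\nu\|q$ rather than performing a single two-factor split into $q_1$ and $q/q_1$ as the paper does --- has no effect on the output.

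The genuine gap is exactly the one you flag at the end as ``the main obstacle.'' Your third paragraph asserts, without proof, that for $p\|q$, $p\nmid rs$, $p\notin\cQ_f$, $p\nmid Q$, the difference $g=f(X+r)-f(X)$ is not a polynomial of degree $\le 1$ modulo $p$, so that (with the linear Fourier phase $-vX$ included) the derivative $s\,g'(a+sX)-v$ is not identically zero mod $p$ and Weil's bound gives a $\sqrt p$ saving \emph{uniformly} in $v$. This is not obvious: when $f$ has a nontrivial denominator the claim requires an argument, which in the paper is Lemma~\ref{lem:gcd}. The proof there is not a one-liner --- one assumes $(p,f'(X+r)-f'(X)+\ell)=p$, invokes \cite[Lemma 4.5(i)]{Polymath} to integrate, and then iterates the implication $Q_1(a)\equiv 0\Rightarrow Q_1(a+r)\equiv 0$ to force a contradiction once $p$ exceeds the degrees. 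Your sketch does not address this; nor does it discuss the (benign, but necessary to mention) fact that the claim only holds for $p$ large in terms of $d$, the small primes being absorbed into the implied constant. Until this lemma is supplied, the proposal is a correct outline but not a proof. The quadratic case \eqref{eq:bound-incomplete-quadratic} you handle exactly as the paper does (geometric series), and that part is complete.
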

The bound claimed in~\eqref{eq:bound-incomplete-nonquad} corresponds to square-root cancellation in the part of the modulus which is squarefree, has no factor in~$\cQ_f$ and is coprime to~$rs$. 
We have assumed squarefreeness because it usually suffices in applications, and greatly simplifies the argument; 
the contribution of higher powers of primes could be studied in specific cases by elementary arguments (see lemmas 12.2 and 12.3 of~\cite{IK}).

The proof of Lemma~\ref{le:rational_differences} is based on the following Weil bound, which is a slightly weaker form of~\cite[proposition~4.6]{Polymath}.
\begin{lemma}[Weil~\cite{Weil}, Proposition~4.6 of \cite{Polymath}]\label{lem:weil}
Let~$q\geq 1$ be squarefree, and~$f\in\QQ(X)$ of total degree~$\leq d$, which is well-defined~$\mod{q}$. Then
$$ \sum_{n\mod{q}} \e_q(f(n)) \ll_{\ee, d} q^{1/2+\ee} (q, f')^{1/2}. $$
\end{lemma}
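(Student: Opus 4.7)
The plan is to prove the claimed Weil bound by combining multiplicativity with the classical prime-field Weil bound for exponential sums of rational functions.

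\textbf{Step 1 (multiplicative reduction).} Since $q$ is squarefree, the definition~\eqref{eq:def-eq} together with the Chinese Remainder Theorem factorizes the complete sum as
$$ \sum_{n\bmod q} \e_q(f(n)) \;=\; \prod_{p\mid q} S_p, \qquad S_p \;:=\; \sum_{n\bmod p} \e_p\!\big(f(n)/(q/p)\big). $$
Write $f_p(X) := f(X)/(q/p)$. Since $(q/p,p)=1$, multiplication by $1/(q/p)$ is a unit modulo $p$, so $f_p$ has the same poles modulo $p$ as $f$, has the same total degree, and satisfies $p\mid f_p'$ if and only if $p\mid f'$.

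\textbf{Step 2 (bounding each factor).} I would split the primes $p\mid q$ into two cases. If $p\mid f'$, meaning $f$ reduces modulo $p$ to a degenerate rational function (e.g.\ a constant, or a $p$-th power), I would use the trivial bound $|S_p|\leq p$. Otherwise $p\nmid f'$, and the classical Weil bound on exponential sums of non-constant rational functions over a prime field gives $|S_p| \leq (d-1)\,p^{1/2}$; here the terms with $Q(n)\equiv 0\bmod p$ contribute zero by the convention adopted at the start of Section~\ref{sec:weil-bounds}.

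\textbf{Step 3 (combining the factors).} Multiplying the two types of estimate yields
$$ \Big|\sum_{n\bmod q} \e_q(f(n))\Big| \;\leq\; (d-1)^{\omega(q)} \prod_{\substack{p\mid q\\ p\nmid f'}} p^{1/2} \prod_{\substack{p\mid q\\ p\mid f'}} p \;=\; (d-1)^{\omega(q)}\, q^{1/2}\,(q,f')^{1/2}, $$
and the factor $(d-1)^{\omega(q)}$ is absorbed into $q^\ee$ via the standard estimate $\omega(q)\ll \log q/\log\log q$. This gives the stated bound.

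The only substantive obstacle is Step~2 in the non-degenerate case: the prime-field Weil bound for rational exponential sums, which is equivalent to a case of the Riemann hypothesis for curves over finite fields and which I would invoke as a black box (following~\cite[Proposition~4.6]{Polymath}); everything else is CRT bookkeeping and a choice of degenerate-versus-non-degenerate dichotomy that matches the $(q,f')^{1/2}$ factor.
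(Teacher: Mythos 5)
The paper does not prove this lemma: it is quoted (in slightly weakened form) directly from Proposition~4.6 of the Polymath paper, so there is no internal argument to compare against. Your proof is the standard derivation of the composite-modulus statement from the prime-field case: the CRT factorization in Step~1 is correct given the definition~\eqref{eq:def-eq} and the squarefreeness of $q$, and the gcd bookkeeping in Step~3 correctly reassembles $\prod_{p\mid q,\,p\nmid f'}p^{1/2}\prod_{p\mid q,\,p\mid f'}p = q^{1/2}(q,f')^{1/2}$, with $(d-1)^{\omega(q)}\ll_{\ee,d}q^{\ee}$. The one inaccuracy is in Step~2: the implication ``$p\nmid f'$ implies $|S_p|\le (d-1)p^{1/2}$'' is not literally true, because the prime-field Weil bound requires $f$ not to be Artin--Schreier degenerate, i.e.\ not of the form $h^p-h+c$ modulo $p$; for instance $f(X)=X^2-X$ has $f'\equiv 1\pmod 2$, hence $(2,f')=1$, yet $S_2=2>(d-1)\sqrt{2}$ for $d=2$. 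However, if $f$ reduces mod $p$ to $h^p-h+c$ with $h$ non-constant, its total degree is at least $p$, so this can only occur for the $O_d(1)$ primes $p\le d$; using the trivial bound $|S_p|\le p$ there costs at most a factor $p^{1/2}/(d-1)\le d^{1/2}$ per such prime, which is absorbed into the implied constant $\ll_{\ee,d}$. With that repair the argument is complete, modulo the prime-field Weil bound itself, which is legitimately invoked as a black box (it is the Riemann hypothesis for the associated Artin--Schreier curve), exactly as the paper does by citation.
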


To deal with the factor~$(q, f')$, we will use the following lemma.
\begin{lemma}\label{lem:gcd}
Let~$f\in\QQ(X)$, which is not a polynomial of degree~$\leq 2$. Let~$q\geq 2$ be squarefree and such that for all~$p|q$, $p\not\in \cQ_f$, we have~$Q\not\equiv 0\mod{p}$. Then
$$ (q, f'(X+r) - f'(X) + \ell) \ll (q, r) \prod_{\substack{p\mid q, p\nmid r \\ p\in \cQ_f}} p \qquad (r, \ell\in\ZZ). $$
\end{lemma}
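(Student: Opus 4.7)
Since $q$ is squarefree, the gcd factors as $(q, h) = \prod_{p \mid q} (p, h)$, where $(p, h) \in \{1, p\}$ according to whether $h$ reduces to zero in $\FF_p(X)$. I apply this to $h(X) := f'(X+r) - f'(X) + \ell$ and classify each prime $p \mid q$ into three cases. If $p \mid r$, then $h \equiv \ell$ is constant mod $p$ and $(p, h) \leq p$ is absorbed into the factor $(q, r)$. If $p \nmid r$ and $p \in \cQ_f$, then $f \equiv \alpha X^2 + \beta X + \gamma \pmod{p}$, so $f'(X+r) - f'(X) \equiv 2\alpha r$ is a constant and $(p, h) \leq p$ is absorbed into the product over $\cQ_f$. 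The real content is the remaining case $p \nmid r$ and $p \notin \cQ_f$, where I aim to show $(p, h) = 1$ for all but $O_f(1)$ exceptional primes.

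For that case, the hypothesis gives $Q \not\equiv 0 \pmod{p}$, so $f$ reduces in $\FF_p(X)$, and by definition of $\cQ_f$ it does not reduce to a quadratic polynomial. Since $f$ itself is not a polynomial of degree $\leq 2$ over $\QQ$, two subcases arise. \emph{(a)} If $f$ is a polynomial of degree $d \geq 3$ with leading coefficient $a_d$, a binomial expansion gives
$$ f'(X+r) - f'(X) = d(d-1) a_d r \, X^{d-2} + \text{(lower order terms)}, $$
which is a non-constant polynomial of degree $d-2 \geq 1$ mod $p$ whenever $p \nmid d(d-1) a_d r$, a finite exceptional set depending on $f$. \emph{(b)} If $f = P/Q$ is a proper rational function with $\deg Q \geq 1$, then for $p$ outside the finite set dividing $\operatorname{Res}(P, Q)$ or the leading coefficient of $Q$, the reduction $f \bmod p$ remains a proper rational function whose derivative $f'$ has a nonempty pole set $S_p \subset \overline{\FF_p}$ of cardinality at most $2 \deg Q$; the poles of $f'(X+r)$ sit at $S_p - r$. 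Since $p \nmid r$, the shift $X \mapsto X + r$ has additive order $p$ on $\overline{\FF_p}$, so any nonempty translation-invariant subset has cardinality divisible by $p$. Taking $p > 2 \deg Q$ forces $S_p - r \neq S_p$, whence $f'(X+r) - f'(X)$ has at least one pole and is in particular nonzero mod $p$, hence so is $h$.

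All exceptional primes from subcases (a) and (b) contribute a bounded factor $O_f(1)$, absorbed into the implicit constant, giving the claimed bound. The main obstacle I anticipate is subcase (b): ensuring that reduction mod $p$ does not collapse a proper rational function into a polynomial (handled via the resultant argument) and that the pole set is not stabilized by the translation (handled by the orbit-cardinality argument). Uniformity of the bound in $r$ and $\ell$ relies on the fact that the exceptional prime sets depend on $f$ alone, which both arguments deliver; the routine bookkeeping of these finitely many primes is the most error-prone remaining detail.
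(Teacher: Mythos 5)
Your route is genuinely different from the paper's. The paper invokes Polymath's Lemma~4.5(i) to integrate the condition $f'(X+r)-f'(X)+\ell\equiv 0 \pmod p$ up to $f(X+r)\equiv f(X)$ (after subtracting a quadratic), then iterates this functional equation over the full orbit $\{a+jr\}=\FF_p$ to force $f$ to be constant-valued, hence constant, contradicting $p\notin\cQ_f$. You instead work directly with $f'$, splitting into a degree-counting argument when $f$ is a polynomial and a pole-translation argument otherwise. Both proofs hinge on the same fact — translation by $r$ has orbits of size $p$ in $\overline{\FF_p}$ — but the paper applies it to function \emph{values} and you apply it to \emph{poles}. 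Your version avoids the antiderivative step, at the price of a case split on the shape of $f$.

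There is, however, a uniformity gap that would break the downstream application. Your exceptional prime set consists of divisors of $\operatorname{Res}(P,Q)$, of the leading coefficient of $Q$, and of $d(d-1)a_d$, so the final bound is $O_f(1)$, i.e.\ depends on the \emph{coefficients} of $f$. But the lemma is applied in the proof of Lemma~\ref{le:rational_differences} to $f(X)\gets f(a+sX)$ with $a,s$ unbounded, and the resulting estimate must be $\ll_{\ee,d}$, uniform in $a,s$ — indeed, this propagates to the $\ll_{\ee,\cA,d}$ of Theorem~\ref{th:bound-exp-automatic}. The paper achieves this because its proof only ever requires $p$ to exceed a bound depending on $\deg P, \deg Q$. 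To repair your argument, run it on the lowest-terms reduction $P_1/Q_1$ of $f$ modulo $p$ (well-defined for $p\notin\cQ_f$ by the hypothesis $Q\not\equiv 0\pmod p$), not on the global $P,Q$. Then $\deg P_1\leq\deg P$, $\deg Q_1\leq\deg Q$, and the leading coefficients of $P_1,Q_1$ are automatically units mod $p$, so the resultant and leading-coefficient conditions disappear. The dichotomy becomes local: if $\deg Q_1\geq 1$, your pole argument applies with $|S_p|\leq\deg Q_1\leq\deg Q$ and requires only $p>\deg Q$; if $\deg Q_1=0$ and $\deg P_1\geq 3$, your degree argument applies and requires only $p\nmid d_1(d_1-1)$ where $d_1=\deg P_1\leq d$; and $\deg Q_1=0$, $\deg P_1\leq 2$ is precisely $p\in\cQ_f$. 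The surviving exceptional primes are all $\leq\max(\deg Q, d(d-1))$, giving the required $O_d(1)$.
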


\begin{proof}
Write~$f=P/Q$ in reduced form, with~$P, Q \in \ZZ[X]$. It will be sufficient to prove that~$(p, f'(X+r)-f'(X)+\ell)=1$ when~$p$ is large enough in terms of the degree of~$P$ and~$Q$,~$p\not\in \cQ_f$,~$p\nmid 2r$. Suppose otherwise. Then by Lemma 4.5(i) of~\cite{Polymath}, we have~$(p, f(X+r) - f(X) + \ell X - c)=p$ for some class~$c\mod{p}$. Adding to~$f$ an appropriate quadratic polynomial, we may suppose~$(p, f(X+r)-f(X))=p$. Write~$P/Q=P_1/Q_1 \mod{p}$ with~$P_1, Q_1$ coprime. Then we deduce
$$ P_1(a)Q_1(a+r) \equiv Q_1(a)P_1(a+r) \mod{p}. $$
By coprimality, for all~$a\mod{p}$, $Q_1(a)\equiv 0$ implies~$Q_1(a+r)\equiv 0$. Iterating yields~$Q_1(a)\equiv 0$ for all~$a\mod{p}$. If~$p$ is large enough in terms of~$\deg Q$, we would obtain~$Q_1\equiv 0$ which is a contradiction. We deduce~$Q_1(a)\neq 0\mod{p}$ for all~$a$, so that~$P_1(X)/Q_1(X)$ takes a constant value and has no poles. If~$p$ is large enough in terms of~$\deg P$ and~$\deg Q$, we conclude that~$P_1 / Q_1$ is a constant polynomial, which again contradicts the hypothesis~$p\not\in \cQ_f$.
\end{proof}

\begin{proof}[Proof of Lemma~\ref{le:rational_differences}]
The bound~\eqref{eq:bound-incomplete-quadratic} is the simple bound for a geometric sum, therefore, we focus on proving~\eqref{eq:bound-incomplete-nonquad}. Changing variables, the LHS is
$$ \sum_{(y-a)/s < m \leq (y+x-a)/s} \e_q(f(a+ms+r)-f(a+ms)). $$
We cover the summation interval by at most~$1+x/sq$ intervals of length~$q$, and detect the size conditions on~$m$ by additive characters. We obtain
$$ \sum_{(y-a)/s < m \leq (y+x-a)/s} \e_q(f(a+ms+r)-f(a+ms)) \ll \frac x{sq}|S_0(q)| + \sum_{1\leq |\ell| \leq q/2} \frac{|S_\ell(q)|}\ell, $$
where
$$ S_\ell(q) = \sum_{m\mod{q}} \e_q(f(a+ms+r)-f(a+ms) + \ell m). $$
Let~$q_1$ be the largest divisor of~$q$ which is squarefree, coprime with~$rs$ and~$q/q_1$, and has no prime factor in~$\cQ_f$:
$$ q_1 = \prod_{\substack{p \| q,\ p\nmid rs \\ p\not\in \cQ_f}} p. $$
By the Chinese remainder theorem, we may write~$S_\ell(q) = T_1 T_2$, where
$$ T_1 = \sum_{m\mod{q_1}} \e_{q_1}((q/q_1)^{-1}(f(a+ms+r)-f(a+ms) + \ell m)), $$
and
$$ T_2 = \sum_{m\mod{q/q_1}} \e_{q/q_1}(q_1^{-1}(f(a+ms+r)-f(a+ms) + \ell m)). $$
On~$T_2$ we use the trivial bound~$|T_2|\leq q/q_1$. Concerning~$T_1$, by Lemma~\ref{lem:weil} applied with~$f(X)$ replaced by~$f(a+sX+r)-f(a+sX)+\ell X$, we get
$$ T_1 \ll_\ee q_1^{\frac12+\ee} (q_1, \ell + sf'(a+sX+r)-sf'(a+sX))^{\frac12}. $$
Let~$v\in\ZZ$ be such that~$sv\equiv 1\mod{q}$. We apply Lemma~\ref{lem:gcd} with~$r\gets rv$ and~$f(X)\gets f(a+sX)$. We obtain~$(q_1, \ell + sf'(a+sX+r)-sf'(a+sX)) = O(1)$, therefore~$|T_1|=O(q_1^{\frac12+\ee})$, and so
$$ |S_\ell(q)| \ll q^{1+\ee}q_1^{-\frac12}. $$
This leads to the desired conclusion.
\end{proof}

\section{Auxiliary results on automata}\label{sec:auxil-automata}

We quote in this section a few results from the literature which we will use in our proof of Theorem~\ref{th:bound-exp-automatic}. 

From now on, $(a_n)$ denotes a fixed automatic sequence corresponding to a stongly connected automaton $\cA = (Q',\Sigma, \delta',q'_0,Q_0)$, where $\delta'(q'_0,0) = q'_0$. 
We follow some ideas and the notion of \cite{mullner} and consider a naturally induced transducer 
$\mathcal{T}_{\cA} = (Q,\Sigma, \delta,q_0,\Delta, \lambda)$, where $Q \subset (Q')^{n_0}, \pi_1(q_0) = q'_0$, $\delta$ a transition function which is synchronizing\footnote{This means that there exists a synchronizing word $\mathbf{w}_0$, i.e., $\delta(q_0,\mathbf{w}_0) = \delta(q,\mathbf{w}_0)$ for all $q \in Q$.}
and an output function
$\lambda: Q\times \Sigma \to \Delta \subset S_{n_0}$ which ``attaches'' to each transition in the naturally induced transducer a permutation.

A transducer can be viewed as a mean to define functions: on input $\mathbf{w} = w_1w_2\ldots w_r$ the transducer enters states 
$q_0 = \delta(q_0, \ee), \delta(q_0,w_1),\ldots,\delta(q_0,w_1w_2\ldots w_r)$ and produces the outputs
\begin{align*}
  \lambda(q_0,w_1),\lambda(\delta(q_0,w_1),w_2),\ldots,\lambda(\delta(q_0,w_1w_2\ldots w_{r-1}),w_r).
\end{align*}
The function $T(\mathbf{w})$ is then defined as
\begin{align*}
  T(\mathbf{w}) := \prod_{j = 0}^{r-1} \lambda(\delta(q_0,w_1w_2\ldots w_{j}),w_{j+1}).
\end{align*}
We also define the slightly more general form,
\begin{align*}
  T(q,\mathbf{w}) := \prod_{j = 0}^{r-1} \lambda(\delta(q,w_1w_2\ldots w_{j}),w_{j+1}).
\end{align*}

Proposition 2.5 of \cite{mullner} shows how the original automaton and the naturally induced transducer are related, namely
\begin{align}\label{eq:automaton_transducer}
  a_n = \tau(\delta'(q'_0,(n)_k)) = \tau(\pi_1 (T(q_0,(n)_k) \cdot \delta(q_0,(n)_k))).
\end{align}

The following theorem highlights an important closure property of naturally induced transducers.
\begin{theorem}[Theorem 2.7 of~\cite{mullner}]\label{th:full_G}
  Let $\cA$ be a strongly connected automaton. There exists a minimal $d\in \NN$, $m_0 \in \NN$,
  a naturally induced transducer $\mathcal{T}_{\cA}$
  and a subgroup $G$ of $\Delta$ such that the following two conditions hold.
  \begin{itemize}
    \item For all $q \in Q, \mathbf{w} \in (\Sigma^{d})^{*}$ we have $T(q,\mathbf{w})\in G$.
    \item For all $g \in G, q,\overline{q}\in Q$ and $n \geq m_0$ it holds that
      \begin{align*}
	\{T(q,\mathbf{w}): \mathbf{w} \in \Sigma^{nd}, \delta(q,\mathbf{w}) = \overline{q}\} = G.
      \end{align*}
  \end{itemize}
  $d$ and $m_0$ only depend on $\cA$, but not on its initial state $q'_0$.
\end{theorem}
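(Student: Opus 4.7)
The plan is to construct $G$ from the outputs of closed loops at a reference state $q_0$, then propagate the structure to arbitrary pairs $(q,\bar q)$ via strong connectivity.

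First, at the base state set $R_n := \{T(q_0,\mathbf{w}) : \mathbf{w}\in\Sigma^n,\ \delta(q_0,\mathbf{w})=q_0\}\subset\Delta$ and $L := \{n\geq 1 : R_n\neq\varnothing\}$. Strong connectivity gives $L\neq\varnothing$, and concatenating closed loops at $q_0$ yields $R_m R_n\subset R_{m+n}$ and $L+L\subset L$. Set $d_0 := \gcd L$; by Sylvester--Frobenius, $L$ contains every sufficiently large multiple of $d_0$. The sequence $(R_{nd_0})_n$ takes values in the finite set $2^{\Delta}$ and is determined recursively by evolution of a finite-state system (the product of $\cA$ with $\Delta$), so it is eventually periodic; let $e$ be its eventual period and set $d := e\,d_0$. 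For $n$ large, $R_{nd}$ is then constant; denote this common value by $G$. Since $R_{nd}\cdot R_{nd}\subset R_{2nd}$, we have $G\cdot G\subset G$, and being a non-empty sub-semigroup of the finite group $S_{n_0}$, $G$ is a subgroup. This settles the second condition at the pair $(q_0,q_0)$.

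To extend to arbitrary pairs, use strong connectivity to pick fixed paths $\mathbf{u}:q_0\to q$ and $\mathbf{v}:\bar q\to q_0$ with $|\mathbf{u}|+|\mathbf{v}|\equiv 0\bmod d$. For $\mathbf{w}\in\Sigma^{nd}$ with $\delta(q,\mathbf{w})=\bar q$, the concatenation $\mathbf{u}\mathbf{w}\mathbf{v}$ is a closed loop at $q_0$, whose output factorizes as $a\,T(q,\mathbf{w})\,b$ with $a := T(q_0,\mathbf{u})$ and $b := T(\bar q,\mathbf{v})$. For $n$ large this output lies in $G$, giving $a\cdot T_{nd}(q,\bar q)\cdot b\subset G$. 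Prepending and appending the symmetric paths $q\to q_0$ and $q_0\to\bar q$ gives a reverse inclusion of the form $a' G b' \subset T_{nd}(q,\bar q)$ for some fixed $a',b'\in\Delta$, and a cardinality comparison forces $T_{nd}(q,\bar q)$ to be a translate of $G$ of full size $|G|$. Condition~(i) follows from the same factorization applied to any $\mathbf{w}\in(\Sigma^d)^*$ from an arbitrary state. To conclude that this translate is $G$ itself, one invokes the specific construction of the naturally induced transducer $\mathcal{T}_{\cA}$: the $n_0$-fold product state space and the synchronizing word $\mathbf{w}_0$ provide a canonical normalization of $\lambda$ under which the translates collapse to $G$.

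The main obstacle is exactly this collapse. For an arbitrary synchronizing transducer one would only obtain $T_{nd}(q,\bar q)$ as a proper translate of $G$; the proof must exploit the explicit construction of $\mathcal{T}_{\cA}$ (choice of $n_0$, the product structure, and the role of $\mathbf{w}_0$) to normalize this translate to $G$ itself. Once this is done, the minimality of $d$ and the independence of $(d,m_0)$ from $q'_0$ follow because $d$ and $m_0$ are defined intrinsically from $\cA$ alone, as the eventual period and the stabilization threshold of sequences depending only on the automaton.
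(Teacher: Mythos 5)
This statement is quoted verbatim from Theorem~2.7 of~\cite{mullner} and the present paper does not reprove it, so there is no in-paper argument for direct comparison; your attempt must stand on its own, and it has a genuine gap that you yourself flag but do not close.

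The first half of your argument is sound: defining $R_n$ as the closed-loop outputs at a reference state $q_0$, observing $R_m R_n\subset R_{m+n}$, invoking Sylvester--Frobenius to get large multiples of $d_0=\gcd L$, passing to the eventual period $e$ of the finite-valued sequence $(R_{nd_0})_n$, and concluding that the stabilized value $G$ is a non-empty sub-semigroup hence a subgroup of the ambient finite group. The problem is the propagation to arbitrary pairs $(q,\bar q)$. Your sandwich argument only yields that $\{T(q,\mathbf{w}):\mathbf{w}\in\Sigma^{nd},\ \delta(q,\mathbf{w})=\bar q\}$ is a two-sided translate $a^{-1}Gb^{-1}$ of $G$, and you state explicitly that the collapse of this translate to $G$ itself ``must exploit the explicit construction of $\mathcal{T}_{\cA}$'' without ever carrying out that construction or verifying the collapse. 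But this normalization \emph{is} the theorem: the entire reason the statement is existential in the naturally induced transducer (rather than holding for any synchronizing transducer) is that the $n_0$-fold product state space, the choice of reset words, and the resulting structure of $\lambda$ are engineered precisely to make the translates coincide with $G$. A proof that stops at ``a translate of $G$'' has not proved the second bullet.

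The same gap infects your treatment of condition (i). For $\mathbf{w}\in(\Sigma^d)^*$ from an arbitrary state $q$, closing the loop through $q_0$ shows only that $a\,T(q,\mathbf{w})\,b\in G$ with $a=T(q_0,\mathbf{u})$ and $b=T(\delta(q,\mathbf{w}),\mathbf{v})$; to deduce $T(q,\mathbf{w})\in G$ you need $a,b\in G$, which is exactly what you have not established for non-closed paths. Finally, the length bookkeeping in the reverse inclusion ($\mathbf{u}'\mathbf{w}'\mathbf{v}'$ has length $|\mathbf{u}'|+n'd+|\mathbf{v}'|$, not $nd$, so you only get the containment along a subsequence of multiples of $d$ and must then argue from eventual stability), and the claims of minimality of $d$ and independence of $(d,m_0)$ from the initial state, are asserted rather than derived.
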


Finally, \cite[Corollary 2.26]{mullner} shows that there exists $\cA = (Q',\Sigma,q'_0,\delta',\tau)$ generating $(a_n)$ such that $d(\cA) = 1$ and we consider a 
naturally induced transducer which fulfills Theorem~\ref{th:full_G}.

One crucial idea in \cite{mullner} was that the functions $T$ and $\delta$ corresponding to a naturally induced transducer behave ``independently'' of each other.
Thus, we start by giving an important property of synchronizing automata.
\begin{lemma} \label{le:synchronizing}
Let $\mathcal{\cA}$ be a synchronizing DFAO with synchronizing word $\mathbf{w} \in \Sigma^{m_0}$. 
There exists $\eta > 0$ depending only on $m_0$ and $k$ such that the number of integers~$n\in(y, y+x]$ such that
\begin{align*}
  \delta(q,(n)_k) \neq \delta(q,(n)_k^{\lambda})
\end{align*}
is bounded by $O(x k^{-\eta \lambda})$ uniformly for $\lambda < \floor{\log_k(x)}$ and~$y\geq 0$. 
Here,~$(n)_k^{\lambda}$ denotes the digital representation of~$n$ truncated at the~$k$-th digit, 
in other word~$(n)_k^{\lambda} = (m)_k$ where~$m\in[0, k^\lambda)\cap\NN$ and~$m\equiv n\mod{k^\lambda}$.
\end{lemma}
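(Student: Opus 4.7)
The plan is to exploit the defining property of the synchronizing word: once the automaton has processed $\mathbf{w}$ it is in a fixed state $q_s$, regardless of what came before. Consequently, if $\mathbf{w}$ occurs as a substring within the last $\lambda$ base-$k$ digits of $n$, then both the computation on $(n)_k$ started from $q$ and the computation on $(n)_k^{\lambda}$ started from $q$ reach $q_s$ upon finishing the occurrence of $\mathbf{w}$, then continue by reading exactly the same trailing suffix of digits, and so arrive at the same final state. Hence only those $n$ whose last $\lambda$ digits avoid $\mathbf{w}$ as a substring can contribute, and it suffices to count these.

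The one technical wrinkle is the leading-zero stripping implicit in $(n)_k^{\lambda} = (m)_k$ with $m = n \bmod k^\lambda$: an occurrence of $\mathbf{w}$ inside the length-$\lambda$ padded string $e_{\lambda-1}\cdots e_0$ could in principle be partly swallowed by the stripped leading zeros, and so fail to reappear in $(m)_k$. To sidestep this, I would first note that if $\mathbf{w}$ synchronizes then so does $a\mathbf{w}$ for every letter $a$ (since $\delta(q,a\mathbf{w}) = \delta(\delta(q,a),\mathbf{w}) = q_s$). Replacing $\mathbf{w}$ by $1\mathbf{w}$ therefore allows me to assume, at the cost of replacing $m_0$ by $m_0+1$ (which only affects the final constant $\eta$), that $\mathbf{w}$ begins with a nonzero digit. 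Then any occurrence of $\mathbf{w}$ in $e_{\lambda-1}\cdots e_0$ begins with a nonzero digit, survives leading-zero stripping, and reappears in $(m)_k$ with the same trailing suffix, delivering $\delta(q,(n)_k) = \delta(q,(n)_k^\lambda)$.

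Counting bad residues $m \in [0, k^\lambda)$ is then a standard combinatorial argument: partition the $\lambda$ positions into $\lfloor \lambda/(m_0+1) \rfloor$ disjoint consecutive blocks of length $m_0+1$ and require each block to differ from $\mathbf{w}$, which bounds the number of such residues by $(1-k^{-(m_0+1)})^{\lfloor \lambda/(m_0+1)\rfloor}\, k^\lambda \leq k^{(1-\eta)\lambda}$ for some explicit $\eta = \eta(k,m_0) > 0$. Since the hypothesis $\lambda < \lfloor \log_k x\rfloor$ ensures $k^\lambda < x$, each bad residue class contributes $\lceil x/k^\lambda \rceil = O(x/k^\lambda)$ integers in $(y,y+x]$, and multiplying yields the desired $O(x k^{-\eta\lambda})$.

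The only real difficulty is the bookkeeping around leading zeros; this is resolved cleanly by the preliminary reduction to a synchronizing word beginning with a nonzero digit. Everything else is a union bound.
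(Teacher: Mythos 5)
Your proof is correct, and since the paper's own ``proof'' is just a citation to Lemma~2.2 of~\cite{synchronizing}, I can only say that your argument is the standard one for such synchronization lemmas, and it is complete and sound. The key observation — that once any occurrence of $\mathbf{w}$ within the last $\lambda$ digits is passed, both computations (on $(n)_k$ and on $(n)_k^\lambda$) synchronize to $q_s$ and then read identical suffixes — is the right one, and the combinatorial block count gives the exponential decay in $\lambda$. Your handling of the leading-zero stripping is a genuine technical point that needs addressing, and the reduction to a synchronizing word starting with a nonzero digit (via $\mathbf{w} \mapsto 1\mathbf{w}$, which preserves the synchronizing property since $\delta(q,1\mathbf{w}) = \delta(\delta(q,1),\mathbf{w}) = q_s$) resolves it cleanly at the cost of a harmless increase in $m_0$.

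One very minor edge-case worth being explicit about: for $n < k^\lambda$ one has $(n)_k = (n)_k^\lambda$ trivially, so such $n$ never contribute, and for $\lambda \leq m_0$ the block decomposition is vacuous — but then $k^\lambda = O(1)$, so the implied constant in $O(x k^{-\eta\lambda})$ absorbs the trivial bound. You implicitly handle both, but a referee would want to see this said. Otherwise the argument is exactly right.
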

\begin{proof}
  See Lemma~2.2 of~\cite{synchronizing}.
\end{proof}

The next result is the carry property for automatic sequences, or more precisely $T$.

\begin{definition}\label{def:1}
  A function $f:\NN\rightarrow U_d$ has the carry property if there exists $\eta >0$ such that uniformly for $\lambda,\alpha, \rho\in \NN$
  with $\rho<\lambda$, the number of integers $0\leq \ell <k^{\lambda}$ such that there exists $(n_1,n_2) \in \{0,\ldots,k^{\alpha}-1\}^2$ with
  \begin{align}\label{eq:carry_violation}
    f(\ell k^{\alpha} + n_1 + n_2)^{H} f(\ell k^{\alpha} + n_1) \neq f_{\alpha + \rho}(\ell k^{\alpha} + n_1 + n_2)^{H} f_{\alpha + \rho}(\ell k^{\alpha} + n_1)
  \end{align}
  is at most $O(k^{\lambda-\eta \rho})$ where the implied constant may depend only on $k$ and $f$.
\end{definition}
\begin{lemma}[Lemma~4.9 of~\cite{mullner}]\label{le:def_1}
  Definition~\ref{def:1} holds -- uniformly in $r$ -- for $f (n) = D(T(n+r))$ where $D$ is a unitary and irreducible representation of $G$, $\eta$ is given by \cite[Lemma 2.2]{synchronizing}
  and the implied constant does not depend on $r$.
\end{lemma}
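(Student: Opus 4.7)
The plan is to reduce the statement to the carry property for the unshifted $G$-valued function $T$, which is essentially the content of Lemma~4.9 of~\cite{mullner}, and then absorb the shift by~$r$ and the representation~$D$ by two elementary observations.

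First, since $D$ is a unitary representation of $G$ and $T$ takes values in $G$, we have $D(g)^H D(h) = D(g^{-1} h)$ for all $g, h \in G$. Hence the forbidden equality in Definition~\ref{def:1} for $f(n) = D(T(n+r))$ is equivalent to
\[ T(M_1)^{-1} T(M_0) \ \neq\ T_{\alpha+\rho}(M_1)^{-1} T_{\alpha+\rho}(M_0), \]
where $M_i = \ell k^\alpha + n_1 + i n_2 + r$, reducing the problem to the analogous carry property for the $G$-valued function $T$ itself. The additive shift by $r$ is absorbed by the change of variable $\tilde\ell k^\alpha + \tilde n_1 = \ell k^\alpha + n_1 + r$ with $\tilde n_1 \in [0, k^\alpha)$, which translates the range $[0, k^\lambda)$ of~$\ell$ to a window of the same length in~$\tilde\ell$; apart from an exceptional set of size $O(k^{\lambda - \alpha})$ where adding $n_2$ crosses a multiple of $k^\alpha$ (absorbed in the target bound), this reduction is uniform in~$r$, provided the carry property of~$T$ holds with a constant independent of the location of that window.

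For the carry property of $T$ on any window of length $k^\lambda$, there are two exceptional sets to control. The first consists of $\tilde\ell$ for which adding $n_2 < k^\alpha$ to $\tilde n_1$ produces a carry reaching position $\alpha + \rho$: this forces the $\rho$ lowest digits of~$\tilde\ell$ to all equal $k-1$, giving a bound $O(k^{\lambda - \rho})$. Off this set the top digits of $M_0$ and $M_1$ above position $\alpha + \rho$ agree; writing $M_i = U k^{\alpha+\rho} + L_i$, the multiplicativity of $T$ along paths yields $T(M_i) = T(q_0, U) \cdot T(q_U, L_i)$ with $q_U = \delta(q_0, U)$, and the ratio becomes $T(q_U, L_1)^{-1} T(q_U, L_0)$. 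The second exceptional set is the one where this ratio fails to equal $T(q_0, L_1)^{-1} T(q_0, L_0) = T_{\alpha+\rho}(M_1)^{-1} T_{\alpha+\rho}(M_0)$; by Lemma~\ref{le:synchronizing} applied to the transition function~$\delta$, this set has size $O(k^{\lambda - \eta \rho})$.

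The main subtlety lies in this last step: converting the state agreement provided by Lemma~\ref{le:synchronizing} into the required product-level cancellation. Concretely, once a synchronizing word $\mathbf{w}_0$ appears in the stream bridging~$U$ and~$L_i$, one can write $T(q, L_i) = A(q) \cdot T(q_0, L_i)$ where $A(q) = T(q,\mathbf{w}_0) T(q_0,\mathbf{w}_0)^{-1} \in G$ depends on the starting state~$q$ but is independent of~$i$; this common left factor $A(q_U)$ cancels in the ratio, leaving $T(q_0, L_1)^{-1} T(q_0, L_0)$. The group structure of $G$ afforded by Theorem~\ref{th:full_G} is precisely what makes this cancellation go through, and since the argument depends on the digits of $M_i$ only through their intrinsic structure and not through the offset~$r$, the final bound is uniform in~$r$ as claimed.
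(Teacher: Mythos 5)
This lemma is not proved in the paper; it is cited verbatim from Lemma~4.9 of~\cite{mullner} (whose proof in turn rests on Lemma~2.2 of~\cite{synchronizing}), so there is no in-text argument to compare against, and I can only assess your sketch on its own terms.

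Your overall architecture is the right one: reduce from $D\circ T$ to $T$ via $D(g)^{H}D(h)=D(g^{-1}h)$, split $M_i=Uk^{\alpha+\rho}+L_i$, peel off the common factor $T(q_0,(U)_k)$, and control the carry-propagation and the failure of synchronization separately. However, the explicit identity you give at the crucial step is wrong. You assert $T(q,L_i)=A(q)\,T(q_0,L_i)$ with $A(q)=T(q,\mathbf{w}_0)\,T(q_0,\mathbf{w}_0)^{-1}$ once $\mathbf{w}_0$ ``appears in the stream bridging $U$ and $L_i$.'' That identity holds only if $\mathbf{w}_0$ is literally a \emph{prefix} of the digit word of $L_i$, which is a measure-zero event, not the generic one; moreover, a word that genuinely ``bridges'' $U$ and $L_i$ (i.e.\ straddles the cut at position $\alpha+\rho$) cannot be isolated by the multiplicative split at that cut at all. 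In the typical case $\mathbf{w}_0$ occurs as an interior \emph{factor} of the shared top digits, say $\bar L_i=v'\,\mathbf{w}_0\,w_i$, and then
$$T(q,\bar L_i)\,T(q_0,\bar L_i)^{-1}=T(q,v')\,T(\delta(q,v'),\mathbf{w}_0)\,T(\delta(q_0,v'),\mathbf{w}_0)^{-1}\,T(q_0,v')^{-1},$$
which depends on $v'$ and not merely on $(q,\mathbf{w}_0)$. The argument is salvageable because $\bar L_0$ and $\bar L_1$ share the prefix $v'\mathbf{w}_0$ off the carry-exceptional set, so this $v'$-dependent factor is the same for $i=0,1$ and still cancels in the ratio; but the formula you wrote for $A(q)$ would not survive a careful check. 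A related gap: your first exceptional set bounds carries reaching position $\alpha+\rho$, yet to guarantee a \emph{common} synchronizing prefix of $\bar L_0,\bar L_1$ one needs the carry to stop appreciably short of $\alpha+\rho$; one must split on the depth $j$ of the carry and apply the synchronizing bound to a window of length $\rho-j$, rather than appeal to Lemma~\ref{le:synchronizing} in one shot. These are precisely the points a full proof of Lemma~4.9 in~\cite{mullner} has to handle.
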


To use the carry property efficiently, we need the following lemma which is a generalization of Van-der-Corput's inequality.
\begin{lemma}\label{lemma:van-der-corput}
  Let $y\geq 0$, $x\geq 1$ and $Z(n)\in\CC^{d\times d}$ be given for all $n\in(y, y+x]$.
  Then we have for any real number $R\geq 1$ and any integer $k\geq 1$ the estimate
  \begin{equation}\label{eq:van-der-corput}
    \norm{\sum_{y<n\leq y+x} Z(n)}_{F}^2
    \le
    \frac{x+k(R-1)+1}{R}
    \sum_{\abs{r}<R}\rb{1-\frac{\abs{r}}{R}}
    \ssum{y<n, n+kr\leq y+x}\mathrm{tr} \rb{ Z(n+kr)^{H} Z(n) }
  \end{equation}
  where $\mathrm{tr}(Z)$ denotes the trace of $Z$, and~$\|Z\|_F$ the Frobenius norm of~$Z$.
\end{lemma}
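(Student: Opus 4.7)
The plan is to adapt the classical van der Corput differencing argument to matrix-valued sums, using the Frobenius inner product $\langle A, B \rangle_F := \mathrm{tr}(B^H A)$. This turns $\CC^{d\times d}$ into a finite-dimensional Hilbert space, so for any finite family $(V_i)_{i=1}^M$ one has the Cauchy--Schwarz estimate $\norm{\sum_i V_i}_F^2 \le M \sum_i \norm{V_i}_F^2$. I will treat $R$ as a positive integer; the statement for real $R\geq 1$ then follows (up to harmless loss) by working with $\lceil R\rceil$.

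Set $S := \sum_{y < n \le y+x} Z(n)$ and extend $Z$ by zero outside $(y, y+x]$. The standard duplication trick gives, after the substitution $n = m - ks$,
\[ R\, S \;=\; \sum_{0 \le s < R}\sum_n Z(n) \;=\; \sum_{0 \le s < R} \sum_m Z(m - ks) \;=\; \sum_m V_m, \qquad V_m := \sum_{0 \le s < R} Z(m-ks). \]
Here $m$ ranges over an interval containing at most $x + k(R-1) + 1$ integers, since every nonzero contribution requires $m - ks \in (y, y+x]$ for some $s \in \{0, \dotsc, R-1\}$, hence $m \in (y, y+x+k(R-1)]$. Applying matrix Cauchy--Schwarz to the representation $R\, S = \sum_m V_m$ yields
\[ R^2 \norm{S}_F^2 \;\le\; (x + k(R-1) + 1) \sum_m \norm{V_m}_F^2. \]

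Next I would expand $\norm{V_m}_F^2 = \sum_{0 \le s, s' < R} \mathrm{tr}(Z(m-ks')^H Z(m-ks))$, substitute $n = m - ks$ so that $m - ks' = n + kr$ with $r := s - s'$, and interchange the outer sums. For each fixed $r$ with $|r| < R$ there are exactly $R - |r|$ pairs $(s, s') \in \{0, \dotsc, R-1\}^2$ with $s - s' = r$, and by the support of the extended $Z$ the $n$-sum is restricted to those integers for which both $n$ and $n + kr$ lie in $(y, y+x]$. Dividing by $R^2$ and rewriting $(R-|r|)/R = 1 - |r|/R$ produces exactly the claimed bound.

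No step here is expected to be genuinely difficult; the argument is a direct transcription of scalar van der Corput. The only items requiring care are the order of the Hermitian conjugate, which must remain consistent so that the correlation term is $Z(n+kr)^H Z(n)$ rather than its transpose, and the (slightly wasteful) count of at most $x + k(R-1) + 1$ integers in the outer range of $m$, which is what is responsible for the additive $+1$ in the prefactor.
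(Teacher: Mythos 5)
Your argument is correct and complete for integer $R$, and it follows the same line as the source the paper actually uses: the paper does not prove this lemma at all, but simply cites Lemma~5 of the reference labelled \texttt{[invertible]}, which is the matrix van der Corput inequality proved by precisely the duplication-and-Cauchy--Schwarz scheme you describe (turning $\CC^{d\times d}$ into a Hilbert space via the Frobenius inner product). Your bookkeeping of the $n=m-ks$ substitution, the count of $R-|r|$ pairs $(s,s')$ with $s-s'=r$, the position of the Hermitian conjugate so that the cross term is $\mathrm{tr}(Z(n+kr)^H Z(n))$, and the count of at most $x+k(R-1)+1$ integers in the outer range are all right.

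The one place you wave your hands is the passage from integer to real $R\geq 1$. The reduction ``work with $\lceil R\rceil$'' is not automatic: the quantities $\mathrm{tr}(Z(n+kr)^H Z(n))$ for $r\neq 0$ have no sign, so the right-hand side of \eqref{eq:van-der-corput} is not monotone in $R$, and the bound obtained for $\lceil R\rceil$ does not formally imply the one stated for $R$. A clean treatment for non-integral $R$ runs the same argument with fractional weights $w_s=\min(1,(R-s)^+)$ in the definition of $V_m$, taking care that the extra term with index $s=\lfloor R\rfloor$ enlarges the range of $m$ by less than $k$, which the $+1$ in the prefactor is there to absorb. This is a genuinely minor point, and in any case it is irrelevant for the paper's application, where the lemma is invoked with $R=k^{\lambda_2}$, always an integer; but as a proof of the lemma exactly as stated, you should either restrict to integer $R$ or carry the fractional weights explicitly rather than appeal to a monotonicity that does not hold.
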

\begin{proof}
See Lemma 5 of~\cite{invertible}.
\end{proof}

We now quote results from representation theory. Consider a finite groupe~$G$. 
A representation $D$ is a continuous homomorphism $D: G \rightarrow U_d$, where $U_d$ denotes the group of unitary $d\times d$ matrices
over $\CC$. A representation $D$ is called irreducible if there exists no non-trivial subspace $V \subset U_d$ such that $D(g)V \subseteq V$ holds for all $g\in G$.
It is well-known that there only exist finitely many equivalence classes of unitary and irreducible representations of $G$ 
(see for example \cite[Part I, Section 2.5]{rep_finite}). The Peter-Weyl Theorem (see for example \cite[Chapter 4, Theorem 1.2]{representations}) 
states that the entry functions of irreducible representations (suitable renormalized) form a orthonormal basis of $L^2(G)$.
Thus we can express any function $f:G\to \CC$ by these $M_0$ entry functions:
  
\begin{lemma}\label{le:reps}
  Let $G$ be a finite group. There exists $M_0 \in \NN$ and $M_0$ irreducible unitary representations $(D^{(\ell)})_{0\leq \ell < M_0}$ of $G$, not necessarily distinct and written as matrices~$D^{(\ell)} = (d_{i,j}^{(\ell)})_{i, j}$, such that for any $f: G\to \CC$ there exist coefficients $(c_{\ell})$ and indices~$(i_\ell)$, $(j_\ell)$ with
  \begin{align*}
  f(g) = \sum_{0\leq \ell < M_0} c_{\ell} d_{i_{\ell}j_{\ell}}^{(\ell)}(g)
\end{align*}
for all $g\in G$ and $\sum \abs{c_{\ell}} \ll \norm{f}_1$.
\end{lemma}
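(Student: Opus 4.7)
The plan is to appeal directly to the Peter--Weyl theorem cited just above. For a finite group~$G$, the theorem says that the matrix coefficients $d_{ij}^{(\pi)}$ of the (finitely many) equivalence classes of irreducible unitary representations $\pi$ of $G$, suitably normalized, form an orthonormal basis of~$L^2(G)$. Concretely, under the inner product $\langle f, h\rangle := \sum_{x\in G} f(x)\overline{h(x)}$ the functions $\phi_{\pi,i,j}(g) := \sqrt{d_\pi/|G|}\, d_{ij}^{(\pi)}(g)$, for $1 \leq i,j \leq d_\pi$, form an orthonormal basis whose total cardinality is $\sum_\pi d_\pi^2 = |G|$. I would take this as the value of~$M_0$, enumerate the basis as $\phi_0, \dotsc, \phi_{M_0-1}$, and record for each~$\ell$ the underlying representation~$D^{(\ell)}$ and matrix entry indices~$(i_\ell, j_\ell)$; the same irreducible~$\pi$ then appears with multiplicity~$d_\pi^2$ in this list, and both~$M_0$ and the list of representations depend only on~$G$, not on~$f$.

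Any function $f : G \to \CC$ then admits the Parseval expansion
$$ f(g) = \sum_{\ell<M_0} \alpha_\ell \phi_\ell(g), \qquad \alpha_\ell := \sum_{x\in G} f(x) \overline{\phi_\ell(x)}. $$
Setting $c_\ell := \alpha_\ell \sqrt{d_\pi/|G|}$ (with $d_\pi = \dim D^{(\ell)}$) absorbs the normalization and gives exactly the claimed identity $f(g) = \sum_\ell c_\ell\, d_{i_\ell j_\ell}^{(\ell)}(g)$. To bound the $c_\ell$, I use the fact that unitarity of each matrix $D^{(\pi)}(x)$ forces $|d_{ij}^{(\pi)}(x)|\leq 1$ for every~$x\in G$, whence
$$ |c_\ell| \leq \frac{d_\pi}{|G|}\sum_{x\in G} |f(x)| = \frac{d_\pi}{|G|}\, \norm{f}_1, $$
and summing over~$\ell$ yields $\sum_{\ell<M_0} |c_\ell| \leq \norm{f}_1 \cdot |G|^{-1}\sum_\pi d_\pi^3 \leq (\max_\pi d_\pi)\,\norm{f}_1$, which is the stated bound with implied constant depending only on~$G$.

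There is no genuine obstacle here: the whole argument reduces to an immediate application of Peter--Weyl together with the trivial bound $|d_{ij}^{(\pi)}(g)|\leq 1$ coming from unitarity. The only care needed lies in the book-keeping of normalization constants between the orthonormal basis~$\phi_\ell$ and the raw matrix-coefficient functions~$d_{i_\ell j_\ell}^{(\ell)}$ that appear in the statement.
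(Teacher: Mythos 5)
Your proof is correct and follows exactly the route the paper intends: the paper offers no explicit proof of this lemma but precedes it by citing the Peter--Weyl theorem for finite groups, and your argument (orthonormalize the matrix coefficients, expand $f$ by Parseval, and bound the coefficients via $|d_{ij}^{(\pi)}(g)|\leq 1$ from unitarity) simply fills in those details. The bookkeeping of the normalization factor $\sqrt{d_\pi/|G|}$ and the final estimate $\sum_\ell |c_\ell|\leq (\max_\pi d_\pi)\,\norm{f}_1$ are both correct.
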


\section{Van der Corput differentiation}\label{sec:weyl-differentiation}

The following proposition reduces the study of automatic sequences with strongly connected underlying automaton, to bounds on correlations sums.
\begin{proposition}\label{prop:weyl}
Let~$g:\NN_{>0} \to \CC$ be a function with~$|g(n)|\leq 1$,~$x\geq 1$, $y\geq 0$ be real numbers, and~$(a_n)$ be an automatic sequence in base~$k$, with strongly connected underlying automaton~$\cA$. Denote
\begin{equation}\label{eq:def-shifted-sum}
U(x, y ; h ; q, a) := \ssum{y<n\leq y+x \\ n\equiv a\mod{q}} g(n)\bar{g(n+h)}.
\end{equation}
Then for some~$\eta>0$ depending on~$\cA$, all~$\lambda_1, \lambda_2\in\NN$ with~$M := k^{\lambda_1}$, $R := k^{\lambda_2}$ satisfying~$RM^2 \leq x/10$, we have
$$ \Big|\ssum{y<n \leq y+x} a_n g(n) \Big| \ll xM^{-\eta} + \sum_{0\leq m < M}\Big(\frac{x}{RM} \sum_{0\leq r < R} \ssum{0\leq m' < RM^2 \\ m' \equiv m\mod{M}} |U(x, y ; rM ; RM^2, m')|\Big)^{1/2}. $$
\end{proposition}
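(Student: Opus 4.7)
The strategy is to unfold $a_n$ via the naturally induced transducer, use the synchronizing property to localize the state component modulo $M$, use van der Corput differencing on the $T$-component, and finally apply the carry property to reduce to $g$-correlations on arithmetic progressions modulo $RM^2$.

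\emph{Step 1 (Transducer and synchronization).} By~\eqref{eq:automaton_transducer} we have $a_n = \tau(\pi_1(T(q_0,(n)_k)\cdot\delta(q_0,(n)_k)))$. Apply Lemma~\ref{le:synchronizing} with truncation length $\lambda_1$: outside a set of size $O(xM^{-\eta})$, we may replace $\delta(q_0,(n)_k)$ by $\delta(q_0,(n)_k^{\lambda_1})$, which depends only on $m:=n\bmod M$. Setting $q_m:=\delta(q_0,(m)_k)$ and $F_m(t):=\tau(\pi_1(t\cdot q_m))$, this yields
\[
\sum_{y<n\leq y+x} a_n g(n) \;=\; \sum_{0\leq m<M}\ \ssum{y<n\leq y+x\\ n\equiv m\,(M)} F_m\bigl(T(q_0,(n)_k)\bigr) g(n) \;+\; O(xM^{-\eta}),
\]
accounting for the first term in the claimed bound.

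\emph{Step 2 (Representation decomposition).} By Theorem~\ref{th:full_G}, $T(q_0,(n)_k)$ may be pushed into the group $G$ (up to finitely many initial transitions, absorbed into an $O(1)$ set of $n$). Apply Lemma~\ref{le:reps} to the bounded function $F_m$: we obtain $F_m=\sum_\ell c_{m,\ell}\, d^{(\ell)}_{i_\ell j_\ell}$ with $M_0=O(1)$ terms and $\sum_\ell|c_{m,\ell}|=O(1)$. It suffices, for each unitary irreducible representation $D$ of $G$ appearing, to bound the matrix-valued sum
\[
S_m \;:=\; \ssum{y<n\leq y+x\\ n\equiv m\,(M)} D\bigl(T(q_0,(n)_k)\bigr)\, g(n).
\]

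\emph{Step 3 (Van der Corput on the compressed variable).} Writing $n=m+Ms$, the sum $S_m$ runs over $s$ in an integer interval $\cJ_m$ of length $\leq x/M$. Applying Lemma~\ref{lemma:van-der-corput} to $Z(s):=D(T(q_0,(m+Ms)_k))\,g(m+Ms)$ with shift parameter $1$ and window $R$ gives
\[
\|S_m\|_F^2 \;\ll\; \frac{x/M+R}{R}\sum_{|r|<R}\Bigl|\ssum{s,s+r\in\cJ_m} g(n)\overline{g(n+Mr)}\,\mathrm{tr}\bigl(D(T(q_0,(n+Mr)_k))^H D(T(q_0,(n)_k))\bigr)\Bigr|,
\]
where $n=m+Ms$. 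Since $RM^2\leq x/10$, the prefactor is $\ll x/(MR)$.

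\emph{Step 4 (Carry property).} Apply Lemma~\ref{le:def_1} to $f(n)=D(T(q_0,(n)_k))$ with the parameter choice $\alpha=\lambda_1+\lambda_2$ and $\rho=\lambda_1$, so that $Mr<k^\alpha$ for $|r|<R$ and $k^{\alpha+\rho}=RM^2$. Outside an exceptional set of integers $n$ of cardinality $O(x/M^\eta)$, the matrix $D(T((n)_k))^H D(T((n+Mr)_k))$ equals its truncated analogue $D(T_{\alpha+\rho}((n)_k))^H D(T_{\alpha+\rho}((n+Mr)_k))$, which is a function only of $m':=n\bmod RM^2$. The exceptional contribution to each $r$-sum is $O(x/M^\eta)$, totalling $O(Rx/M^\eta)$ before the prefactor; after dividing by $MR$, taking square roots, and summing over the $M$ residues, this is absorbed into $O(xM^{-\eta'})$ for a smaller $\eta'>0$ (using $x\geq M^2$).

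\emph{Step 5 (Reduction to $U$).} On the complementary set, group $n$ by $m'=n\bmod RM^2$ (necessarily $m'\equiv m\pmod M$). Bounding the trace trivially by the dimension of $D$ (which is $O(1)$), the inner sum over $n$ becomes $U(x,y;Mr;RM^2,m')$ up to the boundary terms where $n+Mr\notin(y,y+x]$, which are negligible. Using symmetry in $r\mapsto -r$ to restrict to $0\leq r<R$ yields
\[
\|S_m\|_F^2 \;\ll\; \frac{x}{MR}\sum_{0\leq r<R}\ \ssum{0\leq m'<RM^2\\ m'\equiv m\,(M)} \bigl|U(x,y;rM;RM^2,m')\bigr|\;+\;(\text{carry error}).
\]
Taking square roots and summing over $m\in[0,M)$ gives the stated bound.

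\emph{Main obstacle.} The delicate part is Step~4: the matrix-valued sequence $D\circ T\circ(\cdot)_k$ must be shown to satisfy the carry property (this is precisely Lemma~\ref{le:def_1}), and the parameters $\alpha,\rho$ must be chosen so that the truncation length matches the modulus $RM^2$ of the desired correlation sum, while all error terms (synchronization, carry, boundary) combine into a single $O(xM^{-\eta})$.
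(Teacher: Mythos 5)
Your proof follows the same architecture as the paper's: unfold $a_n$ via the naturally induced transducer, synchronize modulo $M$, decompose by irreducible representations, apply van der Corput differencing, and then use the carry property to collapse the $T$-factor to a function of $n\bmod RM^2$. Steps 1--3 and 5 are in order; the parameter choice $\alpha=\lambda_1+\lambda_2$, $\rho=\lambda_1$ is exactly the one used in the paper.

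There is, however, a genuine bookkeeping error in Step~4 that invalidates the stated absorption of the carry error. You claim that, for each fixed residue $m$ and each $r$, the carry-exceptional set has size $O(x/M^\eta)$. But the inner sum at this stage ranges over $n\equiv m\pmod M$, which is only $\approx x/M$ integers, so your claimed exceptional count exceeds the trivial count whenever $\eta<1$ --- it cannot be right. A correct count: writing $n=\ell\, k^\alpha + w$ with $k^\alpha = MR$, Lemma~\ref{le:def_1} gives $O\bigl((x/(MR))\, M^{-\eta}\bigr)$ bad values of $\ell$, and for fixed $m$ there are only $R$ choices of $w\equiv m\pmod M$, giving $O(xM^{-1-\eta})$ bad $n$ per $m$ (this is what the paper records as the $O(x^2M^{-2-\eta})$ error after the prefactor). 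With your count $O(xM^{-\eta})$, the chain multiply-by-prefactor $\Rightarrow$ square-root $\Rightarrow$ sum over $m$ produces $xM^{(1-\eta)/2}$, which is $\gg x$ since $\eta<1$; the constraint $x\geq M^2$ does not rescue it. The fix is either to use the correct per-$m$ count $O(xM^{-1-\eta})$, or to keep the aggregate bound $\sum_m (\text{bad }n\text{ for }m)=O(xM^{-\eta})$ and apply Cauchy--Schwarz over $m$ when summing the square roots.

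A smaller point: in Step 2 you attribute the containment $T(q_0,(n)_k)\in G$ to discarding ``finitely many initial transitions.'' This is not quite what is needed; the paper invokes Corollary~2.26 of~\cite{mullner} to pass to a transducer with $d(\cA)=1$, so that $T(q,\mathbf{w})\in G$ for all words, not just those of length divisible by $d$. Your reasoning there should be replaced by the appeal to that corollary.
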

\begin{remark}
It was proved by Sarnak that his Möbius randomness conjecture for all deterministic flows would follow from the Chowla conjecture (see~\cite{Sarnak,Tao}) concerning correlations of the Möbius function. 
Proposition~\ref{prop:weyl} could be interpreted as a quantified version of this phenomenon for automatic sequences; 
we see that in this case, binary correlations provide sufficient information. 
Note however that the moduli~$q=RM^2$ of the arithmetic progressions involved in our statement are rather large compared with the shifts $h=rM$.
\end{remark}

We prove Proposition~\ref{prop:weyl} in the remainder of this section.

\subsection{Naturally induced transducer}

We use the concept of naturally induced transducer to rewrite the sequence~$a_n$.
We (still) consider a naturally induced transducer which fulfills Theorem~\ref{th:full_G}.
By \eqref{eq:automaton_transducer}, we can rewrite~$a_n=\tau(\pi_1 (T(q_0,(n)_k) \cdot \delta(q_0,(n)_k)))$. Therefore,
\begin{align*}
  a_n = \sum_{q \in Q} \sum_{\sigma \in G} \tau(\pi_1(\sigma \cdot q)) \ind_{[T(q_0,(n)_k) = \sigma]} \ind_{[\delta(q_0,(n)_k) = q]}.
\end{align*}
Let
$$ \cI = \ZZ \cap (y, y+x] $$
with~$y\in\NN_{\geq 0}$ and~$x\in \NN_{>0}$. The above allows us to rewrite
\begin{align}\label{eq:S_0toS_1}
  S_0(\cI) := \ssum{y<n\leq y+x} a_n f(n) = \sum_{q \in Q} \sum_{\sigma \in G} \tau(\pi_1(\sigma \cdot q)) S_1(\cI; \sigma, q)
\end{align}
where 
\begin{align*}
  S_1(\cI; \sigma, q) := \sum_{y<n\leq y+x} \ind_{[T(q_0,(n)_k) = \sigma]} \ind_{[\delta(q_0,(n)_k) = q]} f(n).
\end{align*}

This implies
\begin{align*}
  \abs{S_0(\cI)} \ll_\cA \sum_{q \in Q} \sum_{\sigma \in G} \abs{S_1(\cI; \sigma, q)}.
\end{align*}

\subsection{Van der Corput differencing and the carry property}

Let~$1\leq M \leq x$, $M = k^{\lambda_1}$ be a power of~$k$, to be determined later.
We use the fact that it is usually sufficient to read the last few digits of $(n)_k$ to determine $\delta(q,(n)_k)$, see Lemma~\ref{le:synchronizing}.
This allows us to rewrite
\begin{equation}\label{eq:S_1toS_2}
\begin{aligned}
S_1(\cI; \sigma, q) {}& = \sum_{y<n\leq y+x} \ind_{[T(q_0,(n)_k) = \sigma]} \ind_{[\delta(q_0,(n)_k) = q]} g(n)\\
  &{} = \sum_{0\leq m < M} \ind_{[\delta(q_0,(m)_k) = q]} S_2(\cI ; m, \sigma) + O(x M^{-\eta}),
\end{aligned}
\end{equation}
where $\eta>0$ only depends on the length of the synchronizing word of the naturally induced transducer $\mathcal{T}_{A}, \mathbf{w}_0$, and 
\begin{align*}
  S_2(\cI ;m, \sigma) := \ssum{y<n\leq y+x\\n\equiv m \bmod M} \ind_{[T(q_0,(n)_k) = \sigma]} g(n).
\end{align*}
We use ideas of representation theory to deal with $\ind_{[T(q_0,(n)_k) = \sigma]}$.
  By Lemma~\ref{le:reps}, we can write
  \begin{align*}
    \ind_{[T(q_0,(n)_k) = \sigma]} = \sum_{0\leq \ell < M_0} c_{\ell} d_{i_{\ell}j_{\ell}}^{(m_\ell)}(T(q_0,(n)_k)),
  \end{align*}
  for some unitary and irreducible representations $D^{(m')}$.
  This gives
  \begin{align*}
    \ssum{y<n\leq y+x\\n\equiv m \bmod M} \ind_{[T(q_0,(n)_k) = \sigma]} g(n) = 
	\sum_{0\leq \ell < M_0} c_{\ell}\ssum{y<n\leq y+x\\n\equiv m \bmod M} d_{i_{\ell}j_{\ell}}^{(m_\ell)}(T(q_0,(n)_k)) g(n)
  \end{align*}
  and
  \begin{align*}
    \Bigg|\ssum{y<n\leq y+x\\n\equiv m \bmod M} d_{i_{\ell}j_{\ell}}^{(m_\ell)}(T(q_0,(n)_k)) g(n)\Bigg| \leq 
	\Bigg\|\ssum{y<n\leq y+x\\n\equiv m \bmod M} D^{(m_\ell)}(T(q_0,(n)_k)) g(n) \Bigg\|_{F},
  \end{align*}
  where $\|.\|_F$ denotes the Frobenius norm.
  
  Thus, we find
  \begin{align*}
    \abs{S_2(\cI;m, \sigma)} \leq \sum_{0\leq \ell < M_0} \abs{c_{\ell}} \norm{S_3(\cI;m, D^{(m_{\ell})})}_{F},
  \end{align*}
  where
  \begin{align*}
    S_3(\cI;m, D) := \ssum{y<n\leq y+x\\n\equiv m \bmod M} D(T(q_0,(n)_k)) g(n).
  \end{align*}

  This gives in total
  \begin{align}\label{eq:S_0toS_3}
    \abs{S_0(\cI)} \ll_{\cA} \max_{D} \sum_{0\leq m<M} \norm{S_3(\cI;m,D)}_F + O(x M^{-\eta}).
  \end{align}

We use Lemma~\ref{lemma:van-der-corput} for the sequence $Z(n) = D(T(q_0,(nM+m)_k)) g(nM+m)$:
\begin{align*}
  &\norm{S_3(\cI;m,D)}_{F}^{2} \leq \frac{x M^{-1} + M(R-1)+1}{R} \sum_{\abs{r}<R}\rb{1-\frac{\abs{r}}{R}} \mathrm{tr} \rb{S_4(\cJ_r; m, D, r)},
\end{align*}
where $\cJ_r := \{n: y<n, n+r M \leq y+x\}$ and 
\begin{align*}
  S_4(\cJ, m, D, r) := & 
    \ssum{n \in \cJ\\ n \equiv m \bmod M}\rb{  D(T(q_0,(n+ rM)_k))^{H}D(T(q_0,(n)_k))}\\
    &\qquad \qquad g(n)\bar{g(n+rM)}.
\end{align*}
We choose $R = k^{\lambda_2}$ and $\lambda_2\in \NN$ subject to~$RM^2 < x/10$, which gives
\begin{align}\label{eq:S_3toS_4}
  &\norm{S_3(\cI;m,D)}_{F}^{2} \ll \frac{x}{RM} \sum_{0\leq r < R}\norm{S_4(\cI; m, D, r)}_F + O(Rx/M),
\end{align}
where the error term is due to the replacement of $\cJ_r$ by $\cI$.

Letting temporarily~$a = y+1$, we rewrite $n = n_1 RM + n_2M + m + a$ to find
\begin{align*}
  &D(T(q_0,(n+ rM)_k))^{H} D(T(q_0,(n)_k))\\
  &\qquad = D(T(q_0, ( n_1 RM + (n_2 M + m + a) + r M)_k))^{H} D(T(q_0,(n_1 RM +(n_2 M + m + a))_k)).
\end{align*}
We apply Lemma~\ref{le:def_1} with $\alpha = \lambda_1 + \lambda_2, \rho = \lambda_1$ and $\ell = n_1$.
This gives
\begin{align*}
  &D(T(q_0,(n+ rM)_k))^{H} D(T(q_0,(n)_k))\\
  &\qquad = D(T_{2\lambda_1 + \lambda_2}(q_0, (n_1 RM + (n_2M + m + a) + r M)_k))^{H} D(T_{2\lambda_1+\lambda_2}(q_0,(n_1 RM +(n_2 M+m+a))_k))\\
  &\qquad = D(T_{2\lambda_1 + \lambda_2}(q_0, (n+ r M)))^{H} D(T_{2\lambda_1 + \lambda_2}(q_0,(n)_k)),
\end{align*}
for all but $O(xR^{-1}M^{-1-\eta})$ values of $n_1 \in [0, x /RM)$ and, 
therefore, for all but $O(x M^{-1-\eta})$ values of $n\in\cI$ (for fixed $m$).

Thus, we find
\begin{align}\label{eq:S_4toS_5}
\begin{split}
  S_4(\cI;m,D,r) &= \ssum{0\leq m' < RM^2\\m' \equiv m \bmod M} D(T_{2\lambda_1+\lambda_2}(q_0, m'+ rM))^{H} D(T_{2\lambda_1 + \lambda_2}(q_0,m')) S_5(\cI;m',r)\\
     &\qquad + O(x M^{-1-\eta}),
\end{split}
\end{align}
where
\begin{equation}\label{eq:post-corput}
  S_5(\cI;m',r) := \ssum{y<n\leq y+x\\n \equiv m' \bmod RM^2} g(n)\bar{g(n+rM)}.
\end{equation}
Note that the trivial estimate $S_5 = O(x/(RM^2))$ gives back the trivial estimate $S_0 \ll x$, so non-trivial bound on~$S_5$ gives a non-trivial bound on~$S_0$.

Combining \eqref{eq:S_3toS_4} and \eqref{eq:S_4toS_5} gives
\begin{align}\label{eq:S_3toS_5}
  \norm{S_3(\cI;m,D)}_{F}^{2} &\ll \frac {x}{RM} \sum_{0\leq r < R }\norm{S_4(\cI; m, D, r)}_F + O(x)\\
      &\ll_{\cA} \frac{x}{RM} \sum_{0\leq r< R} \ssum{0\leq m' < RM^2\\m' \equiv m \bmod M} \abs{S_5(\cI;m',r)} + O(x^2 M^{-2-\eta}).
\end{align}
This together with the definition~\eqref{eq:def-shifted-sum} finishes the proof of Proposition~\ref{prop:weyl}.

\section{Proof of Theorem~\ref{th:bound-exp-automatic} in the strongly connected case}\label{sec:proof-connected}

From Proposition~\ref{prop:weyl}, we will deduce Theorem~\ref{th:bound-exp-automatic} in the following special case.
\begin{proposition}\label{prop:exp-automatic-connected}
Theorem~\ref{th:bound-exp-automatic} holds for sequences~$(a_n)$ whose underlying automata are strongly connected.
\end{proposition}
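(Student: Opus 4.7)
The plan is to apply Proposition~\ref{prop:weyl} with the oscillatory weight $g(n) = \e_q(f(n))$, so that bounding $S_0(\cI) := \sum_{n\in\cI} a_n \e_q(f(n))$ reduces to estimating the two-point correlation sums $U$, which we control via Lemma~\ref{le:rational_differences}. Set $\delta := 1/q_1 + q^2/(q_1 |\cI|^2)$; one may assume $\delta \leq 1$, since otherwise~\eqref{eq:bound-auto-nonquad} follows from the trivial bound $|S_0| \leq x$.

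The first step is to split the $r$-sum on the right-hand side of Proposition~\ref{prop:weyl} into the $r=0$ and $r\neq 0$ contributions. The $r=0$ term is controlled by $|g|\leq 1$ together with the identity
$$ \sum_{\substack{0 \leq m' < RM^2 \\ m'\equiv m \mod{M}}} \#\{n\in\cI : n\equiv m' \mod{RM^2}\} = \#\{n\in\cI : n\equiv m \mod{M}\} \leq x/M + 1. $$
For $r\neq 0$, Lemma~\ref{le:rational_differences} applied with shift $rM$ and modulus $s = RM^2$, exploiting that $M, R$ are powers of $k$ (which the definition of $q_1$ already excludes), yields uniformly in $m'$
$$ |U(x,y;rM;RM^2,m')| \ll_{\ee,d} q^\ee q_1^{-1/2}(q_1,r)^{1/2}\bigl(x/(RM^2)+q\bigr). $$
Summing over the $RM$ residue classes $m'$, then over $1\leq r<R$ via the divisor bound $\sum_{1\leq r<R}(q_1,r)^{1/2} \leq R\sum_{d\mid q_1} d^{-1/2} \ll R q_1^\ee$, taking square roots, and finally summing over $m<M$ (gaining a factor $M$), one reaches
$$ |S_0(\cI)| \ll xM^{-\eta} + \frac{x}{\sqrt R} + xq^\ee q_1^{-1/4+\ee} + Mq^\ee q_1^{-1/4+\ee}\sqrt{xRq}. $$

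The second step is to choose $M$ and $R$ as the least powers of $k$ exceeding $\delta^{-c/\eta}$ and $\delta^{-2c}$ respectively. Using $\delta \geq 1/q_1$ for the first three terms and $\delta \geq q^2/(q_1 x^2)$ for the last, each of the four contributions is at most $x^{1+O(\ee)}\delta^c$. The binding constraint arises from the fourth term and reduces, after substitution, to $\sqrt\delta \leq \delta^{2c(2+1/\eta)}$ (using $q/(x q_1^{1/2}) \leq \sqrt\delta$), which forces $c \leq \eta/(4(2\eta+1))$. For such $c$ the admissibility condition $RM^2 \leq x/10$ of Proposition~\ref{prop:weyl} is automatic, since $RM^2 \ll \delta^{-2c(1+1/\eta)} \leq x^{4c(1+1/\eta)}$ from $\delta \geq x^{-2}$. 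This establishes~\eqref{eq:bound-auto-nonquad}.

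The polynomial quadratic case~\eqref{eq:bound-auto-quad} proceeds by the same template, invoking~\eqref{eq:bound-incomplete-quadratic} in place of~\eqref{eq:bound-incomplete-nonquad}. The sum $\sum_{r<R}\min(x/(RM^2)+1,\,\|2u\bar v\, r RM^3/q\|^{-1}_{\RR/\ZZ})$ is then estimated by the classical Weyl-type bound $\sum_{r<R}\min(N,\|r\alpha\|^{-1}) \ll (1+R/q')(N+q'\log q')$ with $q' = q/\gcd(q,2uRM^3)$. Since $(v,q)=1$ and $M, R$ are powers of $k$, $\gcd(q,2uRM^3) = O_{u,v,k}(1)$, hence $q' \asymp_{u,v,k} q$, and one concludes by the analogous optimization with $\delta$ replaced by $1/q+q/x^2$. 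The principal obstacle is the delicate interaction between the four error terms in the final optimization, which forces $c$ to be a small explicit function of $\eta$; this presents no essential difficulty.
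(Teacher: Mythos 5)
Your non-quadratic treatment follows the paper's route — apply Proposition~\ref{prop:weyl} with $g(n)=\e_q(f(n))$, feed the resulting two-point sums into Lemma~\ref{le:rational_differences}, and optimize over the powers $M,R$ of $k$ — and it is sound. In fact it is slightly cleaner than the paper's: you isolate the $r=0$ term and bound it trivially by $x/M+1$ (the paper lumps it into the factor $\sum_{r<k^\lambda}(r,q)^{1/2}$, which is a bit careless since $(0,q)=q$), and you keep $M$ and $R$ as independent parameters instead of taking $R=M$. Your resulting exponent $c=\eta/(4(2\eta+1))$ and the verification that $RM^2\le x/10$ is compatible with the choice of $c$ are correct, and the admissibility check $\delta\ge x^{-2}$ is justified.

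The quadratic case, however, has a genuine gap. You replace the paper's treatment by the classical Vinogradov bound
\[
\sum_{r<R}\min\bigl(N,\|r\alpha\|^{-1}\bigr)\ll\Bigl(1+\tfrac{R}{q'}\Bigr)(N+q'\log q'),\qquad q'=\frac{q}{\gcd(q,2uRM^3)},
\]
and then assert $\gcd(q,2uRM^3)=O_{u,v,k}(1)$ ``since $(v,q)=1$ and $M,R$ are powers of~$k$''. This is false in general: the quadratic part of Theorem~\ref{th:bound-exp-automatic} makes no coprimality assumption between $q$ and $k$, so $q$ may have a large $k$-part, and then $\gcd(q,RM^3)$ can be as large as $RM^3$. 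When that happens $q'$ collapses, the factor $1+R/q'$ is of size $R/q'$, and the eventual bound degenerates (after the $\sqrt{\cdot}$ and the sum over $m$ you are left with something of order $x/\sqrt{q'}$ rather than $x\delta^c$). The paper sidesteps this entirely: it never reduces $\alpha$ to lowest terms, but instead splits $\frac{\bar v}{q}\equiv\frac{1}{qv}-\frac{\bar q}{v}\ \mod 1$ and argues separately according to whether $v\mid 2rRM^2$, obtaining $|U|\ll\min(x/RM^2,\,q/(rRM^2))$ uniformly, with no dependence on $\gcd(q,k)$. Your argument would need either the extra hypothesis $(q,k)=1$ (not available) or a genuinely different treatment of the $k$-part of $q$; as written, it does not establish~\eqref{eq:bound-auto-quad}.
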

The proof is split in two cases, according to whether or not the rational fraction~$f$ is a quadratic polynomial.

\subsection{The non-quadratic case}

We assume first that~$f$ is not a quadratic polynomial. Let~$R = M = k^\lambda$, and
$$ q_1 = \prod_{\substack{p\|q, p\nmid k\\ p\not\in \cQ_f}} p. $$
We assume also that~$x\geq q q_1^{-\frac12}$ without loss of generality, since otherwise the right-hand side of~\eqref{eq:bound-auto-nonquad} is larger than the trivial bound~$O(x)$ 
for the left-hand side. Recall the definition~\eqref{eq:def-shifted-sum}. We use Lemma~\ref{le:rational_differences} with~$g(n) = \e_q(f(n))$ and our choice of~$R$ and $M$, to find the following estimate
\begin{align*}
 \sum_{0\leq r<k^{\lambda}} \ssum{0\leq m' < k^{3\lambda}\\m' \equiv m \bmod k^{\lambda}} \abs{U(x, y ; rM ; RM^2, m')}
	    \ll_{\ee} {}&  \sum_{0\leq r < k^{\lambda}} k^{2\lambda} q^{\ee} \rb{\frac{x}{k^{3\lambda}} + q}  
 		  \Bigg(\prod_{\substack{p | q, p\nmid rk\\p \notin T_f}} p^{-1/2}\Bigg)\\
	  \ll {}& k^{2\lambda}\rb{\frac{x}{k^{3\lambda}} + q} q_1^{-1/2} q^{\ee} 
		  \Bigg(\sum_{0\leq r<k^{\lambda}} (r, q)^{1/2} \Bigg).
\end{align*}
Thus, we find by Proposition~\ref{prop:weyl} that for some~$\eta>0$ depending on~$\cA$,
\begin{align*}
  \abs{\ssum{n\in \cI} a_n \e_q(f(n))} &\ll_{\cA,\ee} x \rb{\frac{k^{\lambda}}{q_1^{1/2}} + \frac{q k^{4\lambda}}{x q_1^{1/2}}}^{1/2} q^{\ee/2} 
		  + O(x k^{-\lambda \eta/2})
\end{align*}
uniformly in $\lambda$ such that~$k^{3\lambda} < x/10$. We choose $\lambda$ such that~$k^{\lambda} \asymp_k \min((q_1^{\frac12}q^{-1}x)^{1/8},(q_1)^{1/4})$. 
This gives
\begin{equation}
\abs{S_0(\cI)} \ll_{\cA, k, \ee, d}  x q^\ee \Bigg(\frac1{q_1^{1/4}} + \Bigg(\frac{q}{q_1^{1/2}x}\Bigg)^{1/8}\Bigg)^{1/2} + x \Bigg(\frac{1}{q_1^{1/4}} + \Bigg(\frac{q}{q_1^{1/2}x}\Bigg)^{1/8}\Bigg)^{\eta},
\end{equation}
and implies our claimed bound~\eqref{eq:bound-auto-nonquad} for $c = \min(\eta/16,1/32)$.

\subsection{The quadratic case}

Here again we assume that~$g(n) = \e_q(f(n))$. If~$f$ is quadratic, then for the purpose of bounding~\eqref{eq:def-shifted-sum} we may assume~$f(X) = \frac uv X^2$ with~$v\neq 0$ and $(qu, v)=1$. Let~$s=RM^2$, where~$R$ and~$M$ are powers of~$k$ satisfying~$1\leq RM < x/10$. By Lemma~\ref{le:rational_differences}, we have
$$ |U(x, y ; rM ; RM^2, m')| \ll \min\Big( \frac {x}{RM^2}, \Big\| \frac{2u\bar{v} rRM^2}{q}\Big\|^{-1}\Big). $$
Assume now that~$(RM)^2 < q/(4u)$, which does not contradict the hypotheses~$R, M\geq 1$ if we let~$q$ be large enough in terms of~$u$. Then
$$ \frac{2u\bar{v} rRM^2}{q}  \equiv  \frac{2urRM^2}{qv} - \frac{2u\bar{q} r RM^2}{v} \quad \mod{1}. $$
By our hypothesis~$(RM)^2 < q/(4|u|)$, as soon as~$v\nmid 2 r RM^2$, we obtain
$$ \Big\|  \frac{2u\bar{v} rRM^2}{q} \Big\| \geq \frac1v - \Big|\frac{2urRM^2}{qv}\Big| \geq \frac1{2v} \gg_f 1. $$
If, on the other hand, $v \mid 2 r RM^2$, we obtain
$$ \Big\|  \frac{2u\bar{v} rRM^2}{q} \Big\| = \Big| \frac{2urRM^2}{qv} \Big| = \frac{2|u|rRM^2}{q|v|} $$
since the latter is less than~$1/2$, again by our hypothesis~$(RM)^2 < q/(4|u|)$. In any case, we obtain
$$ |U(x, y ; rM ; RM^2, m')| \ll \min\Big( \frac{x}{RM^2}, \frac{q}{rRM^2}\Big), $$
and therefore
$$ \ssum{0\leq m' < RM^2 \\m' \equiv m \bmod M} |U(x, y ; rM ; RM^2, m')| \ll \min\Big( \frac{x}{M}, \frac{q}{rM}\Big). $$
We sum the previous bound over~$r<R$. We obtain
\begin{align*}
\sum_{r<R} \ssum{0\leq m' < RM^2\\m' \equiv m \bmod M} \abs{U(x, y ; rM ; RM^2, m')}
{}& \ll \Big(\frac{x}M + \sum_{1\leq r < R}  \frac{q}{rM}\Big) \\
{}& \ll \frac{x + q\log R}{M}.
\end{align*}
We now pick~$R \asymp_k \min\{x/M^2, \sqrt{q}/M\}$, and find by Proposition~\ref{prop:weyl}
\begin{align*}
\abs{\sum_{n\in \cI} a_n \e_q(f(n)) } \ll_{\cA, k,\ee,d} (xq)^\ee \Big(x(x+q)\Big(\frac{M^2}q + \frac{M}{q^{1/2}}\Big)\Big)^{1/2} + x M^{-\eta/2}.
\end{align*}
If~$x\leq q$, we pick~$ M \asymp_k (x/\sqrt{q})^{1/(1+2\eta)}$, and if~$x>q$, we pick~$M\asymp_k q^{1/(2+4\eta)}$. We find in any case
$$ \abs{\sum_{n\in \cI} a_n \e_q(f(n))} \ll x^{1+\ee}\Big(\frac1q + \frac{q}{x^2}\Big)^c $$
with~$c = \eta / (2 + 4\eta)$, and our claimed bound follows.

\section{Proof for non strongly connected automata}\label{sec:proof-final}

We will deduce the full generality of Theorem~\ref{th:bound-exp-automatic} from Proposition~\ref{prop:exp-automatic-connected} and the following fact.
\begin{proposition}\label{prop:to-connected}
Let~$g:\NN\to\CC$ be a function with~$|g(n)|\leq 1$, and assume that for every strongly connected automatic sequence~$\bb = (b_n)$, there is a non-decreasing function~$E(\bb, \cdot) : \RR_+ \to \RR_+$ that
\begin{equation}
\Big| \ssum{y < n \leq y+x} b_n g(n+r) \Big| \leq E(\bb, x) \qquad (r\in\ZZ, y\geq 0, x\geq 1).\label{eq:prop-connect-hypo}
\end{equation}
Then for any automatic sequence~$(a_n)$, not necessarily strongly connected, we may associate a finite set~$\{\bb^{(j)} = (b^{(j)}_n)\}_{j=1}^J$ of strongly connected automatic sequences, and a positive number~$\delta>0$ such that for all~$y\geq 0$, $x\geq 1$ and~$\sigma\in\NN$ with~$K := k^\sigma \in [1, x]$, we have
\begin{equation}
\Big| \ssum{y < n \leq y+x} a_n g(n) \Big| \ll \big\{x^{1-\delta} K^\delta + x K^{-1} \max_j E(\bb^{(j)},K) \big\}.\label{eq:bound-auto-general}
\end{equation}
\end{proposition}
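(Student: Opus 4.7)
Proof plan:

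My strategy is to decompose the underlying automaton $\cA = (Q', \Sigma, \delta, q_0, \tau)$ along its terminal strongly connected components (SCCs), and reduce $K$-block sums of $(a_n)$ to sums over the associated strongly connected automatic sequences. Let $C_1, \ldots, C_s$ be the terminal SCCs reachable from $q_0$; for each such $C_i$ and each state $q \in C_i$, the restriction of $\cA$ to $C_i$ with initial state $q$ is strongly connected and generates a strongly connected automatic sequence $\bb^{(q,i)}$ (possibly after a standard modification, such as adjoining a $0$-loop initial state, to respect the convention $\delta(q_0, 0) = q_0$). The finite collection $\{\bb^{(q,i)}\}$ serves as our family $\{\bb^{(j)}\}_{j=1}^J$.

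Given $K = k^\sigma \leq x$, I write each $n = \ell K + m$ with $0 \leq m < K$ and $\ell = \lfloor n/K \rfloor$, declaring $\ell$ \emph{good} if $q_\ell := \delta(q_0, (\ell)_k)$ lies in some terminal SCC, and \emph{bad} otherwise. Since the condensation of the reachable part of $\cA$ is a finite DAG and every non-terminal SCC has at least one outgoing digit, a standard counting argument yields $\#\{\ell \in [1, N] : \text{bad}\} \ll N^{1 - \eta'}$ for some $\eta' = \eta'(\cA) > 0$. Hence the bad $\ell$'s, together with the two boundary partial blocks, contribute at most $O(K + K \cdot (x/K)^{1-\eta'}) = O(x^{1-\eta'} K^{\eta'})$, absorbed into the target error term $x^{1-\delta} K^\delta$ by choosing $\delta \le \eta'$.

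For each good $\ell$ with $q_\ell \in C_{i(\ell)}$, the inner block-sum is $\sum_{m=0}^{K-1} \tau(\delta(q_\ell, w(m))) g(\ell K + m)$, where $w(m) = 0^{\sigma - |m|_k}(m)_k$ denotes the $\sigma$-digit padded representation of $m$. The main technical difficulty is that this sum is not the restriction of a single strongly connected sequence $\bb^{(q_\ell)}$: the leading zeros push the effective initial state to $q_\ell^{(j)} := \delta(q_\ell, 0^j) \in C_{i(\ell)}$ with $j = \sigma - |m|_k$ varying with the magnitude of $m$. I handle this by decomposing $[0,K)$ by digit-length $L := |m|_k$; on the slice $m \in [k^{L-1}, k^L)$ one has $\tau(\delta(q_\ell, w(m))) = b^{(q_\ell^{(\sigma - L)}, i(\ell))}_m$, and the hypothesis applied with shift $r = \ell K$ bounds that slice by $E(\bb^{(q_\ell^{(\sigma - L)}, i(\ell))}, k^L) \leq \max_j E(\bb^{(j)}, K)$. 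Summing over the $O(\sigma)$ slices, the $O(x/K)$ good values of $\ell$, and the finitely many terminal SCCs produces a bound of order $\sigma \cdot x K^{-1} \max_j E(\bb^{(j)}, K)$; the extraneous factor $\sigma = \log_k K$ is absorbed into $x^{1-\delta} K^\delta$ by mildly shrinking $\delta$ and using $\sigma \ll_\ee K^\ee$ with the trivial bound $\max_j E(\bb^{(j)}, K) \leq K$.
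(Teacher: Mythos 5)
Your high-level strategy matches the paper's: decompose along terminal strongly connected components, discard the sparse set of $K$-blocks whose leading part has not yet entered a final component (at cost $O(x^{1-\delta}K^\delta)$), and estimate the remaining blocks via the hypothesis applied to the associated strongly connected sequences. The bad-index count and its absorption into $x^{1-\delta}K^\delta$ are also correct.

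The place where the argument breaks down is the final absorption of the logarithmic factor. Your digit-length slicing of each $K$-block is a perfectly reasonable way to address the leading-zero issue, but it produces a bound of order
\[
\sigma \cdot \frac{x}{K}\max_j E(\bb^{(j)},K),
\]
and this is \emph{not} dominated by $x^{1-\delta}K^\delta + xK^{-1}\max_j E(\bb^{(j)},K)$. Your proposed absorption would require
\[
(\sigma-1)\,\frac{x}{K}\max_j E(\bb^{(j)},K) \ll x^{1-\delta'}K^{\delta'},
\]
and even with the trivial bound $\max_j E(\bb^{(j)},K)\le K$ the left-hand side is $\gg (\sigma-1)\,x$, while the right-hand side is $\le x$ because $K\le x$. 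The inequality $\sigma\ll_\ee K^\ee$ does not help: for $K\le x$ one has $xK^\ee \gg x^{1-\delta'}K^{\delta'}$ rather than the reverse. Since the proposition allows $E(\bb,\cdot)$ to be an arbitrary non-decreasing function (e.g.\ bounded), the $\log_k K$ factor cannot be dispensed with by your route, so this is a genuine gap, not a cosmetic one.

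The paper sidesteps the slicing entirely. It invokes Proposition~2.25 of~\cite{mullner} to produce a fixed finite family $\{\bb^{(j)}\}$ such that for every block index $r\in\cR$ one has the \emph{exact} identity $a_{rK+n}=b_n^{(j)}$ on the whole range $0\le n<K$, for a \emph{single} $j=j(r)$. This lets one apply the hypothesis once per block with shift $rK$ and interval length $K$, giving $E(\bb^{(j)},K)$ per block and no $\log K$ loss. The content you were trying to reconstruct by hand — how to make the effective initial state independent of the number of leading zeros while staying within the class of strongly connected automata with a $0$-loop at the initial state — is precisely what that cited proposition provides, and is not something you can replace by the naive slicing without paying the extra $\sigma$. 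If you want to keep your slicing, you would either need additional structure on $E$ (e.g.\ polynomial growth, which is all that is used downstream in the paper but is not assumed in the proposition), or you would need to weaken the statement of the proposition to allow an extra $K^\ee$.
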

\begin{remark}
It is important to note the requirement that the hypothesized upper-bound~\eqref{eq:prop-connect-hypo} is uniform with respect to~$r$.
\end{remark}
\begin{proof}[Proof of Proposition~\ref{prop:to-connected}]
Let~$\cA = (Q, \Sigma, \delta, q_0, \tau)$ be the automaton underlying~$(a_n)$, and define
$$ {\mathcal R} := \{r\in \NN : \delta(q,(r)_k) \text{ belongs to a final component of } \cA \text{ for any } q\in Q\}. $$
Then we have the uniform bound
\begin{equation}
|(y, y+x] \cap \NN \smallsetminus {\mathcal R}| \ll x^{1-\delta} \qquad (y\geq 0, x\geq 1)\label{eq:bound-connect-B}
\end{equation}
for some~$\delta>0$ depending on~$\cA$. We let~$\{(b^{(j)}_n)\}_{j=1}^J$ be the finite set of all automatic sequences associated with final components of~$\cA$ (as described in Proposition~2.25 of~\cite{mullner}), with the same output function~$\tau$.

We consider some fixed~$y\geq 0$ and~$x\geq 1$. For the purpose of proving the bound~\eqref{eq:bound-auto-general}, we may assume that~$x$ is large enough in terms of~$\cA$. Let~$\sigma\in\NN$ with~$1\leq K := k^\sigma \leq x$. 
We split the sum on the left-hand side of~\eqref{eq:bound-auto-general} in congruence classes~$\mod{K}$, getting
$$ \ssum{y \leq n < y+x} a_n g(n) = \sum_{r\geq 0} \ssum{0\leq n < K \\ y \leq r K + n < y+x} a_{rK+n} g(r K + n). $$
Note that the sum over~$n$ is void unless~$r\in (y/K-1, (y+x)/K)$. From this fact, the bound~\eqref{eq:bound-connect-B} and our hypothesis~$\|g\|_\infty \leq 1$, we obtain
$$ \sum_{r\geq 0} \ssum{0\leq n < K \\ y \leq r K + n < y+x} a_{rK+n} g(r K + n) = \ssum{r\geq 0 \\ r\in \cR} \ssum{0\leq n < K \\ y \leq r K + n < y+x} a_{rK+n} g(r K + n) + O(x^{1-\delta} K^\delta). $$
For~$r\in \cR$, our automaton reads numbers from left to right, so that~$a_{rK+n} = b_n^{(j)}$ for some~$j$ (depending on~$r$); we recall that there are only finitely many possibilities for~$j$. Therefore,
$$ \Bigg| \ssum{r\geq 0 \\ r\in \cR} \ssum{0\leq n < K \\ y \leq r K + n < y+x} a_{rK+n}  g(r K + n) \Bigg| \leq \sum_{\frac yK-1 < r < \frac{y+x}K} \max_j \Bigg| \ssum{0\leq n < K \\ y\leq rK + n < y+x} b_n^{(j)} g(rK+n)\Bigg|. $$
In the inner sum, the size conditions on~$n$ describe an interval of length~$K$, for all but at most two values of $r$. Gathering the above and using our hypothesis~\eqref{eq:prop-connect-hypo}, we find
\begin{align*}
\Bigg| \ssum{y \leq n < y+x} a_n g(n) \Bigg| {}&\ll \sum_{\frac yK-1 \leq r < \frac{y+x}K} \max_j E(\bb^{(j)}, K)  + x^{1-\delta} K^\delta \\
{}& \ll x K^{-1} \max_j E(\bb^{(j)}, K) + x^{1-\delta} K^\delta
\end{align*}
as claimed.
\end{proof}

\begin{proof}[Proof of Theorem~\ref{th:bound-exp-automatic}]
We consider the case of~\eqref{eq:bound-auto-nonquad}; the argument~\eqref{eq:bound-auto-quad} is similar and slightly simpler. 
Proposition~\ref{prop:exp-automatic-connected} shows that the estimate~\eqref{eq:bound-auto-nonquad} holds when the sequence~$(a_n)$ is associated to a strongly connected automaton. Note that the upper-bound~\eqref{eq:bound-auto-nonquad} depends only on the total degree of~$f$ (while~\eqref{eq:bound-auto-quad} depends only on the leading coefficient of~$f$). Moreover, if~$r\in\ZZ$ and~${\tilde f}(X) := f(X+r)$, then~$\cQ_{\tilde f} = \cQ_f$. We deduce that the bounds~\eqref{eq:bound-auto-nonquad} holds also, with the same implied constant, for the quantity
$$ \sum_{n\in \cI} a_n \e_q(f(n + r))  $$
uniformly in~$r\in\ZZ$, when the automaton underlying~$(a_n)$ is strongly connected. The hypothesis~\eqref{eq:prop-connect-hypo} is therefore satisfied with
$$ E((a_n), x) = B_{\cA,\ee} x^{1+\ee} \Big(\frac 1{q_1} + \frac{q^2}{q_1 x^2}\Big)^{c}, $$
where~$c>0$ depends on~$\cA$ and~$B_{\cA, \ee}$ depends at most on~$\cA$ and~$\ee$.

Assume now that~$(a_n)$ is not associated to a strongly connected automaton. For all~$K\in[1, x]$ which is a power of~$x$, we obtain by Proposition~\ref{prop:to-connected} the bound
$$ \Bigg|\ssum{y<n\leq y+x} a_n \e_q(f(n))\Bigg| \ll x^{1-\delta} K^\delta + xK^{\ee} \Big(\frac 1{q_1} + \frac{q^2}{q_1 K^2}\Big)^{c_1} $$
for some~$c_1>0$ depending on~$\cA$. We optimize by letting~$K \asymp_k \min\{x, (xq^2q_1^{-1})^{1/3}\}$. The claimed bound~\eqref{eq:bound-auto-nonquad} follows with~$c$ replaced by~$\min\{c_1/3, \delta/3\}$.
\end{proof}

\bibliography{expsum-auto}
\bibliographystyle{amsalpha2}

\end{document}